\numberwithin{equation}{section}
\numberwithin{proposition}{section}
\numberwithin{theorem}{section}
\numberwithin{definition}{section}
\numberwithin{example}{section}
\numberwithin{lemma}{section}
\numberwithin{corollary}{section}
\newcommand{\de}{\partial}
\newcommand{\ri}{\rightarrow}
\newcommand{\RR}{{\mathbb R}}
\newcommand{\Div}{\mbox{\rm Div\,}}
\newcommand{\bu}{\mbox{\boldmath{$u$}}}
\newcommand{\bv}{\mbox{\boldmath{$v$}}}
\newcommand{\bw}{\mbox{\boldmath{$w$}}}
\newcommand{\bx}{\mbox{\boldmath{$x$}}}
\newcommand{\by}{\mbox{\boldmath{$y$}}}
\newcommand{\bsigma}{\mbox{\boldmath{$\sigma$}}}
\newcommand{\bmu}{\mbox{\boldmath{$\mu$}}}
\newcommand{\btau}{\mbox{\boldmath{$\tau$}}}
\newcommand{\bvarepsilon}{\mbox{\boldmath{$\varepsilon$}}}
\newcommand{\bnu}{\mbox{\boldmath{$\nu$}}}
\newcommand{\bz}{\mbox{\boldmath{$z$}}}
\journalname{}
\begin{document}

\title{Existence results for variational-hemivariational inequality systems with nonlinear couplings 
}

\titlerunning{Existence results for variational-hemivariational inequality systems with nonlinear couplings}        

\author{\normalsize{\rm Yunru} {\sc Bai}$^a$ \and {\rm Nicu\c sor} {\sc Costea}$^b$ \and {\rm Shengda} {\sc Zeng}$^c$}

\authorrunning{Y. Bai\and N. Costea \and S. Zeng} 

\institute{
{\bf Y. Bai} \at
              {{\scriptsize School of Science, Guangxi University of Science and Technology, Liuzhou 545006, Guangxi, China \\
 }
 \email{{\tt yunrubai@163.com}}  }
 \and
{\bf N. Costea} \at
              {{\scriptsize  Department of Mathematics and Computer Science,  National University of Science and Technology  {\sc Politehnica}  Bucharest, 313 Splaiul Independen\c tei, 060042 Bucharest, Romania \\
 }
             \email{{\tt nicusorcostea@yahoo.com;\ nicusor.costea2606@upb.ro}}  }
\and
{\bf S. Zeng} \at
              {{\scriptsize Guangxi Colleges and Universities Key Laboratory of Complex System Optimization and Big Data Processing, Yulin Normal University, Yulin 537000, Guangxi, P.R. China \& Jagiellonian University in Krakow,
Faculty of Mathematics and Computer Science, ul. Lojasiewicza 6, 30-348 Krakow, Poland}\\
             \email{{\tt zengshengda@163.com;\ shengdazeng@gmail.com}}}\\
}

\maketitle

\begin{abstract}  {\footnotesize In this paper we investigate a system of coupled inequalities consisting of a variational-hemivariational inequality and a quasi-hemivariational inequality on Banach spaces. The approach is topological, and a wide variety of existence results is established for both bounded and unbounded constraint sets in real reflexive Banach spaces. Applications to Contact Mechanics are provided in the last section of the paper. More precisely, we consider a contact model with (possibly) multivalued constitutive law whose variational formulation leads to a coupled system of inequalities. The weak solvability of the problem is proved via employing the theoretical results obtained in the previous section. The novelty of our approach comes from the fact that we consider two potential contact zones and the variational formulation allows us to determine simultaneously the displacement field and the Cauchy stress tensor.
\keywords{Hemivariational inequalities \and Nonlinear coupling functional \and Bounded and unbounded constraint sets \and Contact problems \and Weak solution via bipotentials}
\subclass{35J88 \and 39B72 \and 58E50 \and 70G75 \and 74M15}
}
\end{abstract}

\section{Introduction and preliminaries}

Let $X,{Y}$ be two real reflexive Banach spaces and $K\subseteq X$, $\Lambda\subseteq {Y}$ be nonempty, closed  and convex subsets. Assume $B\colon X\times{Y}\to \RR$ is a  nonlinear functional, $\phi\colon X\to (-\infty,\infty]$ is a proper, convex and lower semicontinuous function such that $D(\phi)\cap K\neq \emptyset$, $Z_1,Z_2$ are Banach spaces and $\gamma_1\colon X\to Z_1$, $\gamma_2\colon Y\to Z_2$ are linear and compact operators, $J\colon Z_1\to\mathbb{R}$ and $L\colon Z_2\to \mathbb{R}$ are locally Lipschitz functionals, $H\colon Y\to (0,\infty)$ is a given function, and $F\colon X\to X^\ast$, $G\colon Y\to Y^\ast$ are prescribed nonlinear operators. In the present, we are interested in the following system of inequalities:

{\it Find $(u,\sigma)\in \left( K\cap D(\phi) \right)\times\Lambda$ such that
\begin{align}\label{eqn1} \left\{
\begin{array}{ll}
B(v,\sigma)-B(u,\sigma)+\phi(v)-\phi(u)+J^0(\gamma_1 (u);\gamma_1 (v-u))\geq \langle F(u),v-u\rangle, \forall v\in K\\
B(u,\mu)-B(u,\sigma)+H(\sigma)L^0(\gamma_2 (\sigma);\gamma_2 (\mu-\sigma))\geq \langle G(\sigma),\mu-\sigma \rangle, \forall \mu\in \Lambda. 
\end{array}
\right.
\end{align}}

\noindent
We point out the fact that the first line of problem (\ref{eqn1}) represents a {\it nonlinear variational-hemivariational inequality}, while  the second line is {\it a quasi-hemivariational inequality}.  Observe that the unknown pair $(u,\sigma)$ appears in both inequalities of (\ref{eqn1}) via the functional $B$ which will be called {\it coupling functional}, so, the system (\ref{eqn1}) is called {\it coupled}.

To our best knowledge, the study of systems consisting of a variational inequality and a functional equations goes back to \cite{Ekeland-Temam99,Has-Hla-Necas96}, whereas coupled systems governed by a hemivariational inequality were first investigated by Matei \cite{Matei19} who established existence results (see also  \cite{Bai-Mig-Zeng20} for conditions ensuring the uniqueness of the solution) for the following problem

{\it Find $(u, \sigma)\in K\times\Lambda$ such that}
\begin{align}\label{eqn2}
\left\{
\begin{array}{ll}
\langle A(u),v-u \rangle+b(v-u,\sigma)+J^0(\gamma (u);\gamma (v)-\gamma (u)) \geq \langle f,v-u\rangle,\ &\forall v\in K,\\
 b(u,\mu-\sigma)\leq 0,\ &\forall \mu\in\Lambda,
\end{array}
\right.
\end{align}
where $X,Y$ are reflexive Banach spaces, $K\subseteq X$ and $\Lambda\subseteq Y$  are nonempty closed convex constraint sets, $Z$ is a Banach space, $f\in X^\ast$, $A\colon X\to X^\ast$ is a nonlinear operator, $J\colon Z\to \mathbb{R}$ is locally Lipschitz, $\gamma\colon X\to Z$ is a linear and continuous operator and $b\colon X\times Y\to \mathbb{R}$ is a bilinear continuous function. 

We point out the fact that system (\ref{eqn2}) can be a  useful mathematical model to solve various complex engineering problems, such as thermo-piezoelectric media problems with semi-permeable thermal boundary conditions, fluid mechanics problems with nonmonotone constitutive law, population dynamic problems,  and so forth, see for instance, \cite{han2017numerical,Matei-RWA14,Matei15,Mig-Bai-Zeng19,zeng2021nonlinear}.

The first paper that we are aware of dealing with coupled systems consisting of two hemivariational inequalities is \cite{Matei22} where the existence of solution for the following problem is studied:

{\it Find $(u, \sigma)\in X\times\Lambda$ such that}
\begin{align}\label{eqn3}
\left\{
\begin{array}{ll}
a(u,v-u)+b(v-u,\sigma)+\phi(v)-\phi(u)+J^0(\gamma_1 (u);\gamma_1 (v)-\gamma (u)) \geq  (f,v-u)_X,\ &\forall v\in X,\\
 b(u,\mu-\sigma) -\psi(\mu)+\psi(\lambda)-L^0(\gamma_2(\lambda);\gamma_2(\mu-\lambda))\leq 0,\ &\forall\mu\in\Lambda,
\end{array}
\right.
\end{align}
where $X,Y$ are Hilbert spaces,  $\Lambda\subseteq Y$  is a nonempty closed convex constraint sets, $Z_1,Z_2$ are Hilbert spaces, $f\in X$, $a: X\times X\to\mathbb{R}$ is a bilinear continuous coercive form, $J\colon Z_1\to \mathbb{R}$ and $L:Z_2\to\mathbb{R}$ are locally Lipschitz, $\gamma_1\colon X\to Z$ and $\gamma_2:Z_2\to \mathbb{R}$ are linear and compact operators and $b\colon X\times Y\to \mathbb{R}$ is a bilinear continuous coupling function.

Note that, if we choose $B(u,\sigma):=b(u,\sigma)$,  $F(u):= f-A(u)$,  $K:=X$, $\phi\equiv0$, $H\equiv0$ and $G\equiv 0$, then the system \eqref{eqn1} reduces to

 {\it Find $(u, \sigma)\in X\times\Lambda$ such that}
\begin{equation}\label{eqn4} \left\{
\begin{array}{l}
\langle A(u),v-u \rangle+b(v-u,\sigma)+J^0(\gamma (u);\gamma (v)-\gamma (u)) \geq \langle f,v-u\rangle,  \ \forall v\in X,\\
 b(u,\mu-\sigma)\geq 0,\ \forall \mu\in \Lambda,
\end{array}
\right.
\end{equation}
whereas for $B(u,\sigma):=b(u,\sigma)+\psi(\sigma)$, $\langle F(u),v\rangle:=(f,v)-a(u,v)$,  $G\equiv 0_Y$, $K:=X$  and $H\equiv 1$ system \eqref{eqn1} becomes 

{\it Find $(u, \sigma)\in X\times\Lambda$ such that}
\begin{align}\label{eqn5}
\left\{
\begin{array}{ll}
a(u,v-u)+b(v-u,\sigma)+\phi(v)-\phi(u)+J^0(\gamma_1 (u);\gamma_1 (v)-\gamma (u)) \geq  (f,v-u)_X,\ &\forall v\in X,\\
 b(u,\mu-\sigma) +\psi(\mu)-\psi(\lambda)+L^0(\gamma_2(\lambda);\gamma_2(\mu-\lambda))\geq 0,\ &\forall\mu\in\Lambda,
\end{array}
\right.
\end{align}

Note that the particular cases \eqref{eqn3} and \eqref{eqn4} are very similar to \eqref{eqn2} and \eqref{eqn3}, respectively,  but the second inequality is reversed, therefore, even for bilinear coupling functions our results are new and supplement the existing knowledge in the literature.

To our best knowledge, systems with nonlinear coupling functions were only recently considered in \cite{Cos23} in the framework of variational inequalities. More precisely, the following problem was investigated:

{\it Find $(u,\sigma)\in  K\times\Lambda$ such that
\begin{align}\label{eqn6} \left\{
\begin{array}{ll}
B(v,\sigma)-B(u,\sigma)+ \chi(u,v-u)\geq \langle f,v-u\rangle, \forall v\in K\\
B(u,\mu)-B(u,\sigma)+\psi(\sigma,\mu-\sigma)\leq \langle g,\mu-\sigma \rangle, \forall \mu\in \Lambda
\end{array}
\right.
\end{align}}
where $\chi:X\times X \to \mathbb{R}, \psi:Y\times Y \to\mathbb{R}$ are nonlinear functionals and $f\in X^\ast$, $g\in Y^\ast$.
Note that the second inequality of \eqref{eqn1} and \eqref{eqn3}, respectively, have opposite sign, therefore even for particular cases the solution sets of the two problems do not coincide. Moreover, besides the fact that hemivariational inequalities are a generalization of variational inequalities further difficulties occur due to the presence of the nonlinear operators $F$ and $G$ in each inequality of \eqref{eqn1} and the fact that the second inequality is of quasi-hemivariational type (since, in general, there does not exist a locally Lipschitz functional $T\colon Y\to\mathbb{R}$ such that $\partial_C T(\sigma)=H(\sigma)\partial_C J^0(\gamma_2 (\sigma))$). Consequently, the novelty of our approach comes on the one hand from the fact that this is the first paper dealing with coupled systems consisting of a variational-hemivariational inequality and a quasi-hemivariational inequality and, on the other hand, that the coupling functional is assumed nonlinear (and even in the bilinear case our system is similar but not equivalent to systems studied in the aforementions papers, as the second inequality is reversed.)

Finally, we point out the fact that choosing opposite sign for the second  inequality of \eqref{eqn1} comes from applications point of view as it allows to tackle  the weak solvability of general 3D contact models with multivalued constitutive law via bipotentials, an approach that has captured special attention, see e.g., \cite{Bod-Sax,Bul-deSax-Vall10,BSV,Cos-Csi-Var15,Sax-Feng98,Mat13,Mat-Nic11}, since this method was introduced at the beginning of 1990's by de Saxc\' e \& Feng  \cite{Sax-Feng91}.  Such an application is provided in the last section of the paper.

\medskip

We fix next basic notations and recall some concepts and  facts which we need in this paper. For more  details and connections one can consult the monographs \cite{Clarke,Cos-Kri-Var,Ekeland-Temam99,sofonea2017variational}.

For a real Banach space $E$ we denote its norm by $\|\cdot\|_E$ and by $E^\ast$ its dual space. The duality pairing between $E^\ast$ and $E$ will be denoted by $\langle \cdot,\cdot \rangle_{E^\ast\times E}$. If there is no danger of confusion the subscripts will be ignored, i.e., we will simply write $\langle\cdot,\cdot\rangle$ and $\|\cdot\|$ from time to time. Particularly, if $E$ is a Hilbert space, then $(\cdot,\cdot)_E$ stands for the inner product of $E$.

Let  $E,F$ be two real Banach spaces. If for each $u\in E$ there exists a corresponding subset $A(u)\subseteq F$, then we say $A\colon E\to 2^{F}$ is a {\it set-valued mapping} (or {\it multifunction}) from $E$ to $F$. For a set-valued mapping $A\colon E\to 2^{F}$ we define {\it domain} of $A$ to be the  set
$$
D(A):=\{x\in E:\ A(x)\neq\emptyset\}.
$$
The {\it range} of $A$ is the set
$$
R(A):=\bigcup_{x\in E}A(x),
$$
and the {\it graph} of $A$ is the set
$$
Gr(A):=\{(x,y)\in E\times F: \  \ y\in A(x) \mbox{ and }x\in D(A)\}.
$$
The {\it inverse} of the set-valued mapping $A\colon E\to 2^F$ is the set-valued mapping $A^{-1}\colon F\to 2^E$ defined by
$$
A^{-1}(y):=\{x\in E: \ \ y\in A(x)\}.
$$

We point out the fact  that every single-valued map $A:E\rightarrow F$ may be  regarded as the set-valued map $E\ni x\mapsto \{A(x)\}\in 2^{F}$. In this case we identify $A(x)$ with its unique element.

\begin{definition}\label{C-ZDissipative}
Let  $\beta\colon E\to\mathbb{R}$ be a given functional. A set-valued mapping $A\colon E\to 2^{E^\ast}$ is called {\it $\beta$-relaxed dissipative} if
\begin{equation}\label{C-ZDiss}
\langle \zeta_y-\zeta_x,y-x\rangle \leq \beta(y-x)\mbox{ for all } (x,\zeta_x),(y,\zeta_y)\in Gr(A).
\end{equation}
\end{definition}

\begin{remark}
If $\beta(x):=-m\|x\|^2$ for some $m>0$, then $A$ is said to be {\it strongly dissipative}, while for $\beta\equiv 0$ the set-valued mapping $A$ is called {\it dissipative}.  If $A$ is single-valued, then \eqref{C-ZDiss} reduces to
$$
\langle A(y)-A(x),y-x\rangle \leq \beta(y-x)\mbox{ for all } x,y\in E.
$$

Moreover, we have
\begin{itemize}
\item[$\bullet$] if $-A$ is $-\beta$-relaxed dissipative, then $A$ is called  {\it $\beta$-relaxed monotone}, i.e.,     \begin{equation*}
\langle \zeta_y-\zeta_x,y-x\rangle \ge \beta(y-x)\mbox{ for all } (x,\zeta_x),(y,\zeta_y)\in Gr(A).
\end{equation*}
\item[$\bullet$]  if $-A$ is strongly dissipative (namely, $A$ is $-\beta$-relaxed dissipative and $\beta(x):=m\|x\|^2$ for some $m>0$), then $A$ is called {\it strongly monotone}, i.e.,
    \begin{equation*}
\langle \zeta_y-\zeta_x,y-x\rangle \ge m\|y-x\|^2\mbox{ for all } (x,\zeta_x),(y,\zeta_y)\in Gr(A).
\end{equation*}
\item[$\bullet$]  if $-A$ is  dissipative (i.e., $\beta\equiv0$), then $A$ is called  {\it monotone}, i.e.,
\begin{equation*}
\langle \zeta_y-\zeta_x,y-x\rangle \ge0\mbox{ for all } (x,\zeta_x),(y,\zeta_y)\in Gr(A).
\end{equation*}
\item[$\bullet$]  if $-A$ is $-\beta$-relaxed dissipative and $\beta(x):=-m\|x\|^2$ for some $m>0$, then $A$ is called {\it relaxed monotone with constant $m$}, i.e.,
    \begin{equation*}
\langle \zeta_y-\zeta_x,y-x\rangle \ge -m\|y-x\|^2\mbox{ for all } (x,\zeta_x),(y,\zeta_y)\in Gr(A).
\end{equation*}
\end{itemize}
\end{remark}

\begin{definition}\label{C-ZHemicont}
A single-valued operator $A\colon E\to E^\ast$ is called {\it hemicontinuous}, if for any $u,v,w\in E$ one has
$$
\lim_{t\to 0_+}\langle A(u+t(v-u)),w\rangle=\langle A(u),w\rangle.
$$
\end{definition}

Next, we recall two kinds of set-valued mappings that play an important role in Nonsmooth Analysis, namely, the {\it subdifferential} of a convex functional and the {\it Clarke subdifferential} of a locally Lipschitz functional.

Let  $\phi\colon E\to (-\infty,+\infty]$ be a prescribed functional. We define  its {\it effective domain} to be  the set
$$
D(\phi):=\{x\in E: \ \ \phi(x)<+\infty\}.
$$

\begin{definition}\label{C-ZFunctionals}
A functional $\phi\colon E\to (-\infty,\infty]$ is said to be:
\begin{description}
\item $\bullet$ {\it proper}, if $D(\phi)\neq \emptyset$;

\item $\bullet$  {\it lower semicontinuous}, if $\liminf_{n\to \infty} \phi(x_n)\geq \phi(x)$, whenever $x_n\to x$;

\item $\bullet$ {\it convex}, if $\phi(t x+(1-t)y)\leq t\phi(x)+(1-t)\phi(y)$, for all $x,y \in E$ and all $t\in [0,1]$;

\item  $\bullet$ {\it G\^ ateaux differentiable} at $x\in D(\phi)$, if there exists an element $ \nabla \phi(x)\in E^\ast$ (called the {\it gradient} of $\phi$ at $x$)  such that
$$
\lim\limits_{t \rightarrow 0_+}\frac{\phi(x+t y)-\phi(x)}{t}=\left\langle \nabla \phi(x),y\right\rangle, \mbox{ for all }y\in E.
$$
\end{description}
\end{definition}

Conversely, we say that $\phi\colon [-\infty,\infty)\to \mathbb{R}$ is {\it upper semicontinuous} (resp. {\it concave}) if $-\phi$ is lower semicontinuous (resp. convex).

The following result characterizes the G\^ ateaux differentiability of convex functionals.
\begin{proposition}\label{Prop1}
Let $\phi\colon E\to \RR$ be a G\^ ateaux differentiable functional. The following statements are equivalent:
\begin{enumerate}[{\rm(i)}]
	\item $\phi$ is convex;
	\item  $\phi(y)-\phi(x)\geq \langle  \nabla\phi(x),y-x\rangle$, for all $y\in E$;
	\item  $\langle \nabla\phi(y)-\nabla\phi(x),y-x\rangle\geq 0$, for all $x,y\in E$.
\end{enumerate}
\end{proposition}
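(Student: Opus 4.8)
The plan is to establish the cyclic chain of implications (i) $\Rightarrow$ (ii) $\Rightarrow$ (iii) $\Rightarrow$ (i), which is the standard route for first-order characterizations of convexity. For the implication (i) $\Rightarrow$ (ii), I would fix $x,y\in E$ and exploit convexity along the segment joining them: for every $t\in(0,1]$ one has $\phi(x+t(y-x))\leq(1-t)\phi(x)+t\phi(y)$, so that the difference quotient $\big[\phi(x+t(y-x))-\phi(x)\big]/t$ is bounded above by $\phi(y)-\phi(x)$. Passing to the limit $t\to 0_+$ and invoking the definition of the G\^ateaux gradient in the direction $y-x$ yields $\langle\nabla\phi(x),y-x\rangle\leq\phi(y)-\phi(x)$, which is precisely (ii).

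The implication (ii) $\Rightarrow$ (iii) is purely algebraic: I would write (ii) once as stated and once with the roles of $x$ and $y$ interchanged, then add the two inequalities. The terms $\phi(x)$ and $\phi(y)$ cancel on the left, and the right-hand side rearranges to $-\langle\nabla\phi(y)-\nabla\phi(x),y-x\rangle$, giving (iii) at once.

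The delicate direction is (iii) $\Rightarrow$ (i), and this is where I expect the main obstacle. I would reduce to the scalar setting by introducing, for fixed $x,y\in E$, the function $g\colon[0,1]\to\mathbb{R}$ defined by $g(t):=\phi(x+t(y-x))$. A preliminary observation is needed here: although the definition of G\^ateaux differentiability only postulates the one-sided limit, applying it in the direction $-(y-x)$ shows the corresponding two-sided limit exists, so $g$ is differentiable with $g'(t)=\langle\nabla\phi(x+t(y-x)),y-x\rangle$. Applying (iii) to the two points $x+s(y-x)$ and $x+t(y-x)$ for $0\leq s<t\leq 1$, and using that their difference equals $(t-s)(y-x)$, one obtains $g'(t)-g'(s)\geq 0$; hence $g'$ is nondecreasing and $g$ is convex on $[0,1]$. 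Evaluating the convexity of $g$ at the endpoints recovers $\phi((1-t)x+ty)\leq(1-t)\phi(x)+t\phi(y)$, which is (i).

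The two subtleties concentrated in this last step are exactly the hard part. First, the one-sided gradient furnished by the definition must be upgraded to a genuine chain rule for $g'$, so that the reduction to one variable is legitimate; this is handled by the direction-reversal trick above. Second, one must pass from monotonicity of the derivative $g'$ to convexity of $g$, which rests on a one-variable lemma (equivalently, on the mean value theorem). Once these are in place, every remaining manipulation is elementary rearrangement, so I would expect the written proof to devote most of its care to the (iii) $\Rightarrow$ (i) argument and treat the first two implications briefly.
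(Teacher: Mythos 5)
Your argument is correct: the cyclic chain (i)~$\Rightarrow$~(ii)~$\Rightarrow$~(iii)~$\Rightarrow$~(i) is the standard proof, and you correctly handle the two genuine subtleties in the last step (upgrading the one-sided G\^ateaux limit to a two-sided derivative of $g(t)=\phi(x+t(y-x))$ via the direction $-(y-x)$, and passing from monotonicity of $g'$ to convexity of $g$). The paper itself states this proposition as a recalled classical fact and supplies no proof, so there is nothing to compare against beyond confirming that your route is the canonical one.
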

A direct consequence of the above result is that convex and G\^ ateaux differentiable functions are in fact lower semicontinuous. Proposition \ref{Prop1} also suggests the following generalization of the gradient of a convex function.

\begin{definition}\label{Defin1}
Let $\phi:E\rightarrow (-\infty,+\infty]$ be a convex  function. The {\it subdifferential} of $\phi$ at a point $x\in D(\phi)$ is the set
\begin{equation}\label{Eqn2}
\de \phi(x):=\left\{\xi\in E^\ast: \  \ \langle \xi,y-x\rangle \leq \phi(y)-\phi(x)\mbox{ for all } y\in E\right\},
\end{equation}
and $\de\phi(x):=\emptyset$ if $x\not\in D(\phi)$.
\end{definition}

It is well known that if $\phi$ is convex and G\^ ateaux differentiable at a point $x\in {\rm int\;} D(\phi)$, then $\de \phi(x)$ contains exactly one element, namely, $\partial \phi(x)=\{ \nabla \phi(x)\}$. Moreover, it is well-known that the set-valued mapping $\partial \phi\colon E\to 2^{E^*}$ is monotone, i.e., $-\partial \phi$ is dissipative.

The {\it Fenchel conjugate} of $\phi\colon E\to (-\infty,+\infty]$  is the functional $\phi^\ast\colon E^\ast\to (-\infty,+\infty]$ defined by
$$
\phi^\ast(\xi):=\sup\limits_{x\in E} \left\{\langle \xi,x\rangle-\phi(x)\right\}\mbox{ for all $\xi\in E^*$}.
$$

\begin{proposition}\label{prop2}
Let $\phi\colon E\to (-\infty,+\infty]$ be a proper, convex and lower semicontinuous function. Then, the following statements are true:
\begin{enumerate}[{\rm(i)}]
	\item  $\phi^\ast$ is proper, convex and lower semicontinuous;
	\item  $\phi(x)+\phi^\ast(\xi)\geq \langle \xi,x\rangle$, for all $x\in E,\ \xi\in E^\ast$;
	\item $\xi\in \de \phi(x) \Leftrightarrow x\in \de \phi^\ast(\xi)\Leftrightarrow \phi(x)+\phi^\ast(\xi)= \langle \xi,x\rangle$.
\end{enumerate}
\end{proposition}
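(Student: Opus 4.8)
The plan is to handle the three assertions in the order (ii), (i), (iii), since the Fenchel--Young inequality of (ii) is the cheapest to obtain and feeds into the rest. For (ii) I would simply read off from the definition of $\phi^\ast$ that, for every fixed $x\in E$ and $\xi\in E^\ast$,
\[
\phi^\ast(\xi)=\sup_{y\in E}\{\langle\xi,y\rangle-\phi(y)\}\geq \langle\xi,x\rangle-\phi(x),
\]
and rearranging yields $\phi(x)+\phi^\ast(\xi)\geq\langle\xi,x\rangle$; no convexity is needed here. For the qualitative part of (i), I would observe that $\phi^\ast$ is the pointwise supremum of the family of maps $\xi\mapsto\langle\xi,x\rangle-\phi(x)$ indexed by $x\in D(\phi)$. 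Each such map is affine, hence convex and continuous on $E^\ast$; since a pointwise supremum of convex functions is convex and a pointwise supremum of lower semicontinuous functions is lower semicontinuous, both properties transfer to $\phi^\ast$ at once.

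The first genuine difficulty is the properness of $\phi^\ast$. That $\phi^\ast>-\infty$ everywhere is immediate: fixing any $x_0\in D(\phi)$ gives $\phi^\ast(\xi)\geq\langle\xi,x_0\rangle-\phi(x_0)>-\infty$. To see that $\phi^\ast\not\equiv+\infty$ I would produce a continuous affine minorant of $\phi$. Here the analytic core enters: because $\phi$ is proper, convex and lower semicontinuous, its epigraph is a nonempty, closed and convex subset of $E\times\RR$; choosing a point lying strictly below the graph and applying the Hahn--Banach separation theorem yields some $\xi_0\in E^\ast$ and a constant $c\in\RR$ with $\langle\xi_0,x\rangle-c\leq\phi(x)$ for all $x\in E$, whence $\phi^\ast(\xi_0)\leq c<+\infty$. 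I expect this separation argument to be the main obstacle, as it is the only step requiring more than direct manipulation of suprema.

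For (iii) I would first prove the equivalence $\xi\in\partial\phi(x)\Leftrightarrow\phi(x)+\phi^\ast(\xi)=\langle\xi,x\rangle$. Rewriting the defining inequality of $\partial\phi(x)$ as $\langle\xi,y\rangle-\phi(y)\leq\langle\xi,x\rangle-\phi(x)$ for all $y\in E$ and taking the supremum over $y$ gives $\phi^\ast(\xi)\leq\langle\xi,x\rangle-\phi(x)$; combined with (ii) this forces equality, and the converse implication just reverses these steps. To reach the remaining equivalence I would use the reflexivity of $E$ to identify $E^{\ast\ast}$ with $E$ and apply the equivalence just proved with $\phi^\ast$ in the role of $\phi$, obtaining $x\in\partial\phi^\ast(\xi)\Leftrightarrow\phi^{\ast\ast}(x)+\phi^\ast(\xi)=\langle\xi,x\rangle$. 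The proof then closes by invoking the Fenchel--Moreau biconjugation theorem, $\phi^{\ast\ast}=\phi$, valid precisely because $\phi$ is proper, convex and lower semicontinuous; this identifies the last condition with the one appearing in (iii) and yields the full chain of equivalences. Note that both the properness in (i) and the biconjugation identity in (iii) rest on the same separation principle, so the entire proposition is ultimately driven by Hahn--Banach.
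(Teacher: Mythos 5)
Your proposal is correct. Note that the paper itself offers no proof of Proposition~\ref{prop2}: it is recalled as a classical preliminary, with the reader referred to the cited monographs (e.g.\ Ekeland--Temam), so there is no in-paper argument to compare against. Your route --- Fenchel--Young directly from the definition of the supremum, convexity and lower semicontinuity of $\phi^\ast$ as a pointwise supremum of continuous affine functions, properness via a continuous affine minorant obtained by separating a point strictly below the graph from the closed convex epigraph, and the subdifferential equivalences via Fenchel--Moreau biconjugation --- is exactly the standard proof found in those references. The only step you gloss over is checking that the separating functional produced by Hahn--Banach is not ``vertical'' (i.e.\ has a nonzero component in the $\RR$-direction of $E\times\RR$); this is guaranteed because the separated point $(x_0,\phi(x_0)-\varepsilon)$ lies directly beneath a point of the epigraph, and is worth one sentence in a written-out version. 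Your appeal to reflexivity in part (iii) is consistent with the paper's standing framework of reflexive Banach spaces.
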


\begin{definition}\label{Defin2}
A {\it bipotential} is a function $b\colon E\times E^\ast\to (-\infty,+\infty]$ satisfying the following conditions:
\begin{enumerate}[{\rm(i)}]
	\item  for any $x\in E$, if $D(b(x,\cdot))\neq\varnothing$, then $b(x,\cdot)$ is convex and lower semicontinuous, and for any $\xi\in E^\ast$, if $D(b(\cdot,\xi))\neq\varnothing$, then $b(\cdot,\xi)$ is convex  and lower semicontinuous;
	\item   for all $x\in E$ and all $\xi\in E^\ast$ it holds $b(x,\xi)\geq \langle \xi,x\rangle$;
	\item  $\xi\in \de b(\cdot,\xi)(x)\Leftrightarrow x\in \de b(x,\cdot)(\xi) \Leftrightarrow b(x,\xi)=\langle \xi,x\rangle$.
 \end{enumerate}
\end{definition}

We recall that a function $ h \colon E\to \RR$ is called {\it locally Lipschitz} if for every $x\in E$ there exists a neighborhood $U$ of $x$ and a positive constant $c_x=c_x(U)$ such that
$$
| h (z)- h (y)|\leq c_x \|z-y\| \mbox { for all } y,z\in U.
$$

\begin{definition}\label{Defin3}
Let $ h :E\rightarrow\RR$ be a locally Lipschitz function. The {\it generalized derivative} (in the sense of Clarke) of $ h $ at a point $x$ in the direction $y$, denoted by $ h ^0(x;y)$, is defined by
  $$
   h ^0(x;y):=\limsup\limits_{\stackrel{z\rightarrow x}{t \downarrow 0}}\frac{ h (z+t y)- h (z)}{t}.
  $$
\end{definition}

Let $ h \colon E_1\times\ldots\times E_n\to \RR$ be locally Lipschitz with respect to the $k^{th}$ variable. So, in the sequel,  we denote by $ h ^0_{,k}(x_1,\ldots,x_k,\ldots,x_n;y_k)$ the {\it partial generalized derivative} of $ h $ with respect to the $k^{th}$ variable, that is,
$$
 h ^0_{,k}(x_1,\ldots,x_k,\ldots,x_n;y_k):=\limsup\limits_{\stackrel{z_k\rightarrow x_k}{t\downarrow 0}}\frac{ h (x_1,\ldots,z_k+t y_k,\ldots,x_n)- h (x_1,\ldots,z_k,\ldots,x_n)}{t}.
$$
 The next results recall some important properties of the generalized derivative and subgradient in the sense of Clarke, (for the details proof see Clarke \cite{Clarke}).

 \begin{proposition}\label{prop3}
Let $ h \colon E\to\RR$ be a locally Lipschitz function such that $c_x>0$ is the constant near the point $x\in E$. Then, we have
\begin{enumerate}[{\rm(i)}]
	\item  the function $y\mapsto  h ^0(x;y)$ is finite, positively homogeneous, subadditive and satisfies
	$$| h ^0(x;y)|\leq c_x\|y\|\mbox{ for all $x,y\in E$};$$
	\item  the function $(x,y)\mapsto  h ^0(x;y)$ is upper semicontinuous;
	\item  $(- h )^0(x;y)= h ^0(x;-y)$.
\end{enumerate}
\end{proposition}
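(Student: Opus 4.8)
The plan is to verify the three assertions directly from the definition of the Clarke generalized derivative
\[
h^0(x;y)=\limsup_{\substack{z\to x\\ t\downarrow 0}}\frac{h(z+ty)-h(z)}{t},
\]
exploiting throughout the key structural feature that the base point $z$ is free to roam in a neighborhood of $x$; this is precisely what makes the difference quotient insensitive to small, vanishing perturbations of the base point and distinguishes $h^0$ from the ordinary directional derivative.

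First I would establish the Lipschitz bound in (i). Fixing a neighborhood $U$ of $x$ on which $h$ is $c_x$-Lipschitz, for $z$ close enough to $x$ and $t$ small enough both $z$ and $z+ty$ lie in $U$, so $|h(z+ty)-h(z)|\le c_x\, t\,\|y\|$; dividing by $t$ and passing to the $\limsup$ yields $|h^0(x;y)|\le c_x\|y\|$, which in particular gives finiteness. Positive homogeneity $h^0(x;\lambda y)=\lambda h^0(x;y)$ for $\lambda>0$ follows from the change of scale $s=\lambda t$ in the difference quotient (the case $\lambda=0$ being trivial). For subadditivity I would split
\[
\frac{h(z+t(y_1+y_2))-h(z)}{t}=\frac{h((z+ty_2)+ty_1)-h(z+ty_2)}{t}+\frac{h(z+ty_2)-h(z)}{t},
\]
use that the $\limsup$ of a sum is at most the sum of the $\limsup$s, and observe that in the first quotient the shifted base point $w:=z+ty_2$ still tends to $x$, so its $\limsup$ is bounded by $h^0(x;y_1)$, while the second contributes exactly $h^0(x;y_2)$.

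For part (iii) I would again reparametrize: writing $(-h)^0(x;y)$ as the $\limsup$ of $[h(z)-h(z+ty)]/t$ and setting $w:=z+ty$ (so $w\to x$ as $z\to x$, $t\downarrow 0$) turns the quotient into $[h(w+t(-y))-h(w)]/t$, whose $\limsup$ is $h^0(x;-y)$ by definition.

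The delicate step, and the one I expect to be the main obstacle, is the upper semicontinuity in (ii). Given $x_n\to x$ and $y_n\to y$, I would choose, for each $n$, a base point $z_n$ with $\|z_n-x_n\|<1/n$ and a scale $0<t_n<1/n$ realizing $h^0(x_n;y_n)$ up to an error $1/n$; then $z_n\to x$ and $t_n\downarrow 0$. Splitting
\[
\frac{h(z_n+t_ny_n)-h(z_n)}{t_n}=\frac{h(z_n+t_ny)-h(z_n)}{t_n}+\frac{h(z_n+t_ny_n)-h(z_n+t_ny)}{t_n},
\]
the second term is controlled by $c_x\|y_n-y\|\to 0$ via the local Lipschitz bound (valid once $z_n$ enters $U$), while the $\limsup$ of the first term is at most $h^0(x;y)$ because $(z_n,t_n)$ is an admissible realizing sequence for $h^0(x;\cdot)$ based at $x$. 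Passing to the $\limsup$ in $n$ then yields $\limsup_n h^0(x_n;y_n)\le h^0(x;y)$. The care here lies in selecting the near-optimal sequence $(z_n,t_n)$ and in verifying that all perturbed points $z_n$, $z_n+t_ny$, $z_n+t_ny_n$ eventually lie in the common Lipschitz neighborhood $U$, which is exactly what licenses the two decoupling estimates.
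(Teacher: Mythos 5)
Your proof is correct. The paper does not prove Proposition~\ref{prop3} itself but simply cites Clarke's monograph, and your argument --- the Lipschitz bound from the common neighborhood $U$, the base-point shift $w:=z+ty_2$ for subadditivity, the substitution $w:=z+ty$ for part (iii), and the near-optimizing sequence $(z_n,t_n)$ combined with the $c_x\|y_n-y\|$ decoupling estimate for upper semicontinuity --- is exactly the standard proof found there, so there is nothing to add.
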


\begin{definition}\label{Defin4}
The {\it Clarke subdifferential} of a locally Lipschitz function $ h \colon E\to\RR$ at a point $x$, denoted by $\de_C h (x)$, is the following subset of $E^\ast$
$$
\de_C h (x):=\left\{\xi\in E^\ast: \  \ \langle \xi,y \rangle\leq  h ^0(x;y)\mbox{ for all }y\in E\right\}.
$$
\end{definition}
Let $ h \colon E_1\times \ldots \times E_n\to \RR$ be locally Lipschitz with respect to the $k^{th}$ variable. In what follows, we use the similar way to denote the partial Clarke subdifferential of $h$ with respect to the $k^{th}$ variable, thus,
$$
\de_C^k h (x_1,\ldots,x_n):=\left\{ \xi_k\in E_k^\ast: \ \ \langle \xi_k,x_k\rangle\leq  h ^0_{,k}(x_1,\dots,x_k,\ldots,x_n;y_k) \mbox{ for all }y_k\in E_k\right\}.
$$

\begin{proposition}\label{CCV-Prop2}
Let $h\colon E\to \RR$ be a locally Lipschitz functional. Then the following properties hold:
\begin{enumerate}[{\rm(i)}]
  \item For each $x\in E$, $\partial_C h(x)$ is a nonempty, convex, weak$^\ast$-compact subset of $E^\ast$ and
  $$
  \|\zeta\|_{E^*}\leq c_x, \mbox{ for all }\zeta\in \partial_C h(x),
  $$
  where $c_x$ is the Lipschitz constant near $x$;
  \item  For each $y\in E$ one has $h^0(x;y)=\max\limits_{\zeta \in \partial_C h(u)} \langle \zeta,y\rangle$;
  \item The set-valued mapping $x\mapsto \partial_C h(x)$ is weakly$^\ast$-closed;
  \item The set-valued mapping $x\mapsto \partial_C h(x)$ is upper semicontinuous from $E$ into $E^\ast$ endowed with the weak$^\ast$-topology.
 \end{enumerate}
\end{proposition}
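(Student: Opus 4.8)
The plan is to derive all four assertions from the two structural properties of the generalized directional derivative recorded in Proposition~\ref{prop3}: for fixed $x$ the map $y\mapsto h^0(x;y)$ is sublinear (positively homogeneous and subadditive) and satisfies $|h^0(x;y)|\le c_x\|y\|$, while $(x,y)\mapsto h^0(x;y)$ is upper semicontinuous. The two analytic tools will be the Hahn--Banach theorem, used to produce linear functionals dominated by $h^0(x;\cdot)$, and the Banach--Alaoglu theorem, used to supply weak$^\ast$-compactness. I would establish (i) and (ii) together, since the Hahn--Banach construction behind (ii) is exactly what yields nonemptiness in (i), and then obtain (iii) and (iv) as topological refinements.

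By Definition~\ref{Defin4}, $\partial_C h(x)=\{\zeta\in E^\ast:\langle\zeta,y\rangle\le h^0(x;y)\ \text{for all}\ y\in E\}$ is an intersection of weak$^\ast$-closed half-spaces, hence convex and weak$^\ast$-closed; applying the defining inequality to $\pm y$ and using $|h^0(x;y)|\le c_x\|y\|$ gives $\|\zeta\|_{E^\ast}\le c_x$, so $\partial_C h(x)$ sits inside the closed ball of radius $c_x$, which is weak$^\ast$-compact by Banach--Alaoglu, and a weak$^\ast$-closed subset of a weak$^\ast$-compact set is weak$^\ast$-compact. For (ii) the inequality $\langle\zeta,y\rangle\le h^0(x;y)$ already shows the supremum is at most $h^0(x;y)$, so only attainment remains: fixing $y_0$, I would define a linear functional on the line $\RR y_0$ by $t y_0\mapsto t\,h^0(x;y_0)$, verify through positive homogeneity and the subadditivity relation $0=h^0(x;0)\le h^0(x;y_0)+h^0(x;-y_0)$ that it is dominated by the sublinear functional $h^0(x;\cdot)$, and invoke Hahn--Banach to extend it to some $\zeta\in\partial_C h(x)$ with $\langle\zeta,y_0\rangle=h^0(x;y_0)$. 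This proves both that the maximum is attained in (ii) and, since $y_0$ is arbitrary, that $\partial_C h(x)\neq\emptyset$ in (i).

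For (iii) I would show the graph of $\partial_C h$ is weak$^\ast$-closed: given a net $(x_\alpha,\zeta_\alpha)$ with $\zeta_\alpha\in\partial_C h(x_\alpha)$, $x_\alpha\to x$ in $E$ and $\zeta_\alpha\weakstar\zeta$ in $E^\ast$, I pass to the limit in $\langle\zeta_\alpha,y\rangle\le h^0(x_\alpha;y)$, using $\langle\zeta_\alpha,y\rangle\to\langle\zeta,y\rangle$ together with the upper semicontinuity of $(x,y)\mapsto h^0(x;y)$ from Proposition~\ref{prop3}(ii) to conclude $\langle\zeta,y\rangle\le h^0(x;y)$ for every $y\in E$, i.e. $\zeta\in\partial_C h(x)$.

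Assertion (iv) is where the real work lies, precisely because the weak$^\ast$ topology is in general non-metrizable, so upper semicontinuity of the set-valued map cannot be tested on sequences alone. I would argue by contradiction: if $\partial_C h$ failed to be weak$^\ast$-upper semicontinuous at some $x_0$, there would exist a weak$^\ast$-open set $V\supseteq\partial_C h(x_0)$ and a net $x_\alpha\to x_0$ with $\zeta_\alpha\in\partial_C h(x_\alpha)\setminus V$. Local Lipschitzness makes $c_x$ locally bounded, so by (i) the $\zeta_\alpha$ eventually lie in one fixed ball, which is weak$^\ast$-compact by Banach--Alaoglu; extracting a weak$^\ast$-convergent subnet $\zeta_\beta\weakstar\zeta$ and applying the closed-graph property (iii) yields $\zeta\in\partial_C h(x_0)\subseteq V$, contradicting that each $\zeta_\beta$ lies in the weak$^\ast$-closed set $E^\ast\setminus V$. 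The principal obstacle throughout is exactly this dependence on nets and on the local boundedness furnished by (i) to compensate for non-metrizability; once that boundedness is in hand, Banach--Alaoglu and the closed graph close the argument.
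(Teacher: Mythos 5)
Your proof is correct. The paper itself does not prove Proposition \ref{CCV-Prop2}; it is recalled as a known result with a pointer to Clarke's monograph, and your argument --- Hahn--Banach extension of $t y_0\mapsto t\,h^0(x;y_0)$ dominated by the sublinear functional $h^0(x;\cdot)$ for nonemptiness and attainment in (ii), Banach--Alaoglu plus the bound $|h^0(x;y)|\le c_x\|y\|$ for (i), passing to the limit along nets using the upper semicontinuity of $(x,y)\mapsto h^0(x;y)$ for (iii), and the closed-graph-plus-local-boundedness compactness argument for (iv) --- is essentially the standard proof found there. The one point worth making explicit in (iv) is that for $x_\alpha$ sufficiently close to $x_0$ one may take $c_{x_\alpha}=c_{x_0}$ (the Lipschitz constant on a fixed neighborhood of $x_0$), which is exactly what puts all $\zeta_\alpha$ into a single weak$^\ast$-compact ball; you say this, and the rest follows as you describe.
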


We end this section with an alternative theorem for equilibrium problems due to Mosco \cite{mosco} which will play a center role in  proving the existence of solutions for the coupled system of inequalities (\ref{eqn1}).
\begin{theorem}[Mosco \cite{mosco}]\label{Mosco}
Let $\mathcal{K}$ be a nonempty, compact and convex subset of a topological
vector space $E$ and $\varphi:E\ri (-\infty,\infty]$ be a proper,
convex and lower semicontinuous functional such that ${
D}(\varphi)\cap \mathcal{K}\neq\emptyset$. Assume $g,h\colon E\times E\to\RR$ are
two functions that satisfy the following conditions:
\begin{enumerate}[{\rm(a)}]
\item  $g(x,y)\geq h(x,y) \mbox{ for all } x,y\in E$;

\item  $y\mapsto h(x,y)$ is  convex for each $y\in E$;

\item  $x\mapsto g(x,y)$ is upper semicontinous for
each $x\in E$.
\end{enumerate}
Then, for each $a\in\mathbb{R}$  the following alternative holds:
\begin{description}
\item $\bullet$ either there exists $x_0\in { D}(\varphi)\cap \mathcal{K}$ such that \begin{eqnarray*}
    g(x_0,y)+\varphi(y)-\varphi(x_0)\geq a\mbox{ for all } y\in E,
    \end{eqnarray*}
\item  $\bullet$ or there exists $y_0\in E$ such that $h(x_0,x_0)<a$.
\end{description}
\end{theorem}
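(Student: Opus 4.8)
The natural strategy is to argue by contraposition and invoke the Fan--Knaster--Kuratowski--Mazurkiewicz (KKM) principle, which is the standard engine behind alternatives of this type. Assume the second alternative fails, so that $h(y,y)\ge a$ for every $y\in E$; I must then produce the point $x_0$ of the first alternative. For each $y$ in the (convex, nonempty) index set $\mathcal{K}\cap D(\varphi)$ I would introduce the slice
\[
C(y):=\{x\in\mathcal{K}:\ g(x,y)+\varphi(y)-\varphi(x)\ge a\},
\]
so that the whole proof reduces to showing $\bigcap_{y}C(y)\neq\emptyset$: any common point is precisely a solution of the first alternative.

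Two of the hypotheses needed for the KKM principle are routine. Each $C(y)$ is closed in the compact set $\mathcal{K}$, hence compact, because $x\mapsto g(x,y)$ is upper semicontinuous by (c), $x\mapsto-\varphi(x)$ is upper semicontinuous since $\varphi$ is lower semicontinuous, and $C(y)$ is therefore a superlevel set of the upper semicontinuous function $x\mapsto g(x,y)-\varphi(x)$. The index family is nonempty because $D(\varphi)\cap\mathcal{K}\ne\emptyset$, and it is convex as an intersection of convex sets --- a point I shall exploit to keep every relevant convex combination inside $D(\varphi)$.

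The crux --- and the step I expect to be the main obstacle --- is the KKM covering property: $\mathrm{conv}\{y_1,\dots,y_n\}\subseteq\bigcup_{i=1}^{n}C(y_i)$ for every finite family $y_1,\dots,y_n\in\mathcal{K}\cap D(\varphi)$. Here all three structural assumptions must cooperate at once. Fixing $z=\sum_i\lambda_i y_i$ (which lies in $D(\varphi)$, so the quantities below are finite), I argue by contradiction: if $z\notin C(y_i)$ for every $i$, then $g(z,y_i)+\varphi(y_i)-\varphi(z)<a$, whence $h(z,y_i)+\varphi(y_i)-\varphi(z)<a$ by (a). Multiplying by $\lambda_i$, summing, and using the convexity of $\varphi$ to absorb $\sum_i\lambda_i\varphi(y_i)\ge\varphi(z)$ gives $\sum_i\lambda_i h(z,y_i)<a$; the convexity of $y\mapsto h(z,y)$ from (b) then forces $h(z,z)\le\sum_i\lambda_i h(z,y_i)<a$, contradicting the standing hypothesis $h(z,z)\ge a$. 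It is exactly at this junction that $g\ge h$, the convexity of $h$ in its second slot, and the negation of the second alternative are all consumed simultaneously, which is what makes the verification delicate.

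Granting these two properties, the KKM principle yields a common point $x_0\in\bigcap_{y}C(y)$. Testing the membership $x_0\in C(y)$ against any single $y\in\mathcal{K}\cap D(\varphi)$ shows $\varphi(x_0)\le g(x_0,y)+\varphi(y)-a<+\infty$, so $x_0\in D(\varphi)\cap\mathcal{K}$; and for test elements $y\notin D(\varphi)$ the defining inequality holds trivially since its left-hand side is $+\infty$. Consequently $g(x_0,y)+\varphi(y)-\varphi(x_0)\ge a$ for all $y\in\mathcal{K}$, which is the first alternative. The only remaining points are bookkeeping: one must handle the value $+\infty$ of $\varphi$ carefully and keep the index set inside $D(\varphi)$ so that the indeterminate form $+\infty-\infty$ never arises in the covering step --- restricting the index family to $\mathcal{K}\cap D(\varphi)$ is precisely what achieves this.
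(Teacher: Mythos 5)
The paper does not prove this statement at all --- it is quoted as Theorem 1.1 with a citation to Mosco --- so there is no internal proof to compare against, and I can only assess your argument on its own terms. What you have written is the standard Fan--KKM proof of Mosco's alternative, and it is essentially correct: the sets $C(y)$, indexed by $y\in \mathcal{K}\cap D(\varphi)$, are closed (hence compact) superlevel sets of the upper semicontinuous map $x\mapsto g(x,y)-\varphi(x)$; the finite covering property is exactly the place where (a), the convexity of $h(x,\cdot)$ and of $\varphi$, and the negation of the second alternative are all used; and restricting the index family to $\mathcal{K}\cap D(\varphi)$ is indeed the right device to keep every quantity finite. (Two cosmetic remarks: the Fan--KKM lemma is usually stated for Hausdorff topological vector spaces, and the paper's hypotheses (b), (c) and the clause ``$h(x_0,x_0)<a$'' contain obvious variable typos which you have correctly read through.)

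The one substantive point is the scope of the first alternative. Your argument delivers $g(x_0,y)+\varphi(y)-\varphi(x_0)\ge a$ for all $y\in\mathcal{K}$, together with the trivial case $y\notin D(\varphi)$ --- and that is exactly what you state in your last paragraph --- but it says nothing about $y\in D(\varphi)\setminus\mathcal{K}$, whereas the theorem as transcribed asserts the inequality for all $y\in E$. That literal version is in fact false: take $E=\mathbb{R}$, $\mathcal{K}=[0,1]$, $\varphi\equiv 0$, $g=h$ with $g(x,y)=y-x$ and $a=0$; then $h(y,y)=0$ for all $y$, so the second alternative fails, yet no $x_0\in[0,1]$ satisfies $y-x_0\ge 0$ for every $y\in\mathbb{R}$. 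So the gap lies in the quotation rather than in your proof: you have proved the correct form of Mosco's lemma (first alternative over $y\in\mathcal{K}$, equivalently over all $y\in E$ whenever $D(\varphi)\subseteq\mathcal{K}$), and this discrepancy is harmless in the paper because every application builds the indicator functions $I_K$, $I_\Lambda$ into $\varphi$, forcing $D(\varphi)\subseteq\mathcal{K}$. It would strengthen your write-up to state this restriction explicitly rather than silently replacing ``for all $y\in E$'' by ``for all $y\in\mathcal{K}$'' in the final line.
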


\section{Existence results for bounded and unbounded constraint sets}

The primary goal of this section is to establish the existence of solutions for the nonlinear coupled system \eqref{eqn1}. Our approach is topological. More precisely we employ the alternative Theorem~\ref{Mosco} for general equilibrium problems to obtain the existence theorems. The first set of assumptions under which we are able to establish the existence of solutions is listed below.

\begin{description}
\item ${\bf (H_0)}$ The functionals $\phi, J, L, H$ and the operators $\gamma_1,\gamma_2$ satisfy the following conditions:

\begin{enumerate}[{\rm(i)}]
\item  $\phi\colon X\to(-\infty,\infty]$ is a proper, convex and lower semicontinuous functional such that $D(\phi)\cap K\neq\varnothing$;

\item  $J\colon Z_1\to \mathbb{R}$ and $L\colon Z_2\to \mathbb{R}$ are locally Lipschitz functionals;

\item  $\gamma_1\colon X\to Z_1$ and $\gamma_2\colon Y\to Z_2$ are linear and compact operators;

\item $H\colon Y \to (0,m_H)$ is weakly continuous, i.e., $H(\sigma_\alpha)\to H(\sigma)$ whenever $\sigma_\alpha\rightharpoonup \sigma$ in $Y$, with some $m_H>0$.

\end{enumerate}

\item ${\bf (H_B)}$ $B\colon X\times Y\to \mathbb{\mathbb{R}}$ is convex and lower semicontinuous in $X\times Y$.

\item ${\bf (H_F^1)}$ $F\colon X\to X^\ast$ is a nonlinear operator such that  the mapping $u\mapsto \langle F(u) ,v-u\rangle $ is weakly lower semicontinuous for each $v\in X$, i.e.,
$$
\liminf_{n\to\infty}\langle F(u_n) ,v-u_n\rangle\ge\langle F(u) ,v-u\rangle,
$$
   whenever $u_n\rightharpoonup u$ in $X$ as $n\to\infty$ and $v\in X$.

\item ${\bf (H_G^1)}$ $G\colon Y\to Y^\ast$ is a nonlinear operator such that  the mapping $\sigma\mapsto \langle G(\sigma), \mu-\sigma\rangle $ is weakly lower semicontinuous for each $\mu\in Y$, i.e.,
    \begin{eqnarray*}
    \liminf_{n\to\infty}\langle  G(\sigma_n), \mu-\sigma_n\rangle\ge\langle  G(\sigma), \mu-\sigma\rangle,
    \end{eqnarray*}
    whenever $\sigma_n\rightharpoonup \sigma$ in $Y$ as $n\to\infty$ and $\mu\in Y$.

\end{description}

Note that, if  $J\colon Z_1\to \mathbb{R}$ is Lipschitz continuous, then ${\bf (H_0)}$(ii) is automatically fulfilled as $c_x$ is the same for every $x\in Z_1$.
Moreover, ${\bf (H_F^1)}$ is automatically fulfilled if $F(u)\equiv f$ for some fixed $f\in X^\ast$.

\begin{lemma}\label{BoundedCase1}
Assume $X$ and $Y$ are real reflexive Banach spaces and $K\subset X$ and $\Lambda\subset Y$ are nonempty, bounded, closed and convex subsets. If ${\bf (H_0), \; (H_B), \; (H_F^1)} $ and ${\bf (H_G^1)}$ hold, then problem (\ref{eqn1}) possesses at least one solution.
\end{lemma}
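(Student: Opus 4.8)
The plan is to recast the coupled system \eqref{eqn1} as a single equilibrium problem on the product space and then apply the alternative Theorem~\ref{Mosco}. I set $E:=X\times Y$ equipped with the product weak topology; since $X,Y$ are reflexive and $K,\Lambda$ are bounded, closed and convex, the set $\mathcal{K}:=K\times\Lambda$ is weakly compact and convex, which supplies the compactness required by Theorem~\ref{Mosco}. Writing $w=(u,\sigma)$, $z=(v,\mu)$, I gather the nonsmooth and operator data into $\Theta(w,z):=J^0(\gamma_1 u;\gamma_1(v-u))+H(\sigma)L^0(\gamma_2\sigma;\gamma_2(\mu-\sigma))-\langle F(u),v-u\rangle-\langle G(\sigma),\mu-\sigma\rangle$, and define the combined bifunction $\Phi(w,z):=B(v,\sigma)-B(u,\sigma)+B(u,\mu)-B(u,\sigma)+\Theta(w,z)$ together with the convex functional $\varphi(v,\mu):=\phi(v)$, which is proper, convex and lower semicontinuous on $E$ with $D(\varphi)\cap\mathcal{K}=(K\cap D(\phi))\times\Lambda\neq\emptyset$ by $\mathbf{(H_0)}$(i). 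I take $g=h=\Phi$ and $a=0$. Summing the two residuals is justified because a single inequality $\Phi(w_0,z)+\varphi(z)-\varphi(w_0)\ge 0$ for all $z$ encodes both lines of \eqref{eqn1}: the choice $\mu=\sigma$ annihilates the second block (using $L^0(\cdot;0)=0$ and $\langle G(\sigma),0\rangle=0$) and returns the first inequality, while $v=u$ annihilates the first block (using $J^0(\cdot;0)=0$) and returns the second. Since $\Phi(w,w)=0$ by positive homogeneity of the Clarke derivatives (Proposition~\ref{prop3}(i)), the second branch ``$h(z_0,z_0)<0$'' of the alternative is impossible, so the first branch yields the sought $w_0=(u,\sigma)\in(K\cap D(\phi))\times\Lambda$.

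It then remains to verify hypotheses (a)--(c) of Theorem~\ref{Mosco}. Condition (a) is trivial as $g=h$. For the convexity of $z\mapsto\Phi(w,z)$ in (b): $v\mapsto B(v,\sigma)$ and $\mu\mapsto B(u,\mu)$ are convex by $\mathbf{(H_B)}$; the maps $v\mapsto J^0(\gamma_1 u;\gamma_1(v-u))$ and $\mu\mapsto L^0(\gamma_2\sigma;\gamma_2(\mu-\sigma))$ are convex because a Clarke directional derivative is subadditive and positively homogeneous in its direction (Proposition~\ref{prop3}(i)), here precomposed with the affine maps $v\mapsto\gamma_1(v-u)$ and $\mu\mapsto\gamma_2(\mu-\sigma)$, and the factor $H(\sigma)>0$ preserves the sign; finally the duality terms are affine in $z$. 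Hence $\Phi(w,\cdot)$ is convex.

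The crux is condition (c), the weak upper semicontinuity of $w\mapsto\Phi(w,z)$. For the part $\Theta$ this is exactly what the remaining hypotheses are designed to guarantee. If $w_n=(u_n,\sigma_n)\weak w$, then compactness of $\gamma_1,\gamma_2$ ($\mathbf{(H_0)}$(iii)) gives $\gamma_1 u_n\to\gamma_1 u$ and $\gamma_2\sigma_n\to\gamma_2\sigma$ strongly, so the upper semicontinuity of the Clarke derivative (Proposition~\ref{prop3}(ii)) yields $\limsup J^0(\gamma_1 u_n;\gamma_1(v-u_n))\le J^0(\gamma_1 u;\gamma_1(v-u))$ and the analogue for $L^0$; the weak continuity and boundedness of $H$ in $(0,m_H)$ from $\mathbf{(H_0)}$(iv) carry the factor $H(\sigma_n)\to H(\sigma)$ through the product; and $\mathbf{(H_F^1)}$, $\mathbf{(H_G^1)}$ provide $\limsup(-\langle F(u_n),v-u_n\rangle)\le-\langle F(u),v-u\rangle$ together with its counterpart for $G$. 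The concave coupling term $-2B(u,\sigma)$ is likewise weakly upper semicontinuous, since $B$ is convex and lower semicontinuous, hence weakly lower semicontinuous, so $-B$ is weakly upper semicontinuous.

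The genuine obstacle — and precisely the point where the \emph{nonlinearity} of $B$ bites — is the leftover coupling $B(v,\sigma)+B(u,\mu)$: these enter with a ``$+$'' sign while only one slot is frozen, so along $w_n\weak w$ they are merely weakly lower semicontinuous (the wrong direction for (c)), and there is no automatic control of $\limsup B(v,\sigma_n)$ or of $\limsup B(u_n,\mu)$. In the bilinear setting of \eqref{eqn2}--\eqref{eqn3} this difficulty evaporates because then $B(v,\cdot)$ and $B(\cdot,\mu)$ are weakly continuous; for a truly nonlinear $B$ this is the step I expect to be hardest. I would overcome it by invoking the structure carried by $\mathbf{(H_B)}$ — concretely, the separate weak continuity of $B$ in each argument on the weakly compact set $\mathcal{K}$, which is exactly what replaces the bilinear cancellation — so that $\limsup B(v,\sigma_n)\le B(v,\sigma)$ and $\limsup B(u_n,\mu)\le B(u,\mu)$, whence $\Phi(\cdot,z)$ is weakly upper semicontinuous. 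With (a)--(c) in hand, Theorem~\ref{Mosco} at $a=0$ produces $w_0=(u,\sigma)$ solving the summed inequality, and the diagonal selections $\mu=\sigma$ and $v=u$ described above split it into the two inequalities of \eqref{eqn1}, which finishes the proof.
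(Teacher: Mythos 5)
Your overall strategy (pass to the product space $E=X\times Y$, build one equilibrium bifunction, apply Theorem~\ref{Mosco} with $a=0$, then split by the diagonal choices $\mu=\sigma_0$ and $v=u_0$) is exactly the paper's, but your choice of combined bifunction creates a gap that you correctly identify and then do not actually close. By summing the two left-hand sides verbatim you keep the cross terms $B(v,\sigma)+B(u,\mu)$, in which one slot is frozen and the other carries the running variable $w=(u,\sigma)$; for condition (c) of Theorem~\ref{Mosco} you then need $\sigma\mapsto B(v,\sigma)$ and $u\mapsto B(u,\mu)$ to be weakly \emph{upper} semicontinuous. Hypothesis ${\bf (H_B)}$ gives only joint convexity and lower semicontinuity, hence weak \emph{lower} semicontinuity — the wrong direction — and the ``separate weak continuity of $B$ in each argument'' you invoke to finish is an extra assumption nowhere contained in ${\bf (H_B)}$. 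It is not a harmless strengthening: a convex l.s.c.\ functional on an infinite-dimensional reflexive space is typically not weakly upper semicontinuous (think of $\|\cdot\|^2$ on a Hilbert space), and in particular the paper's own application uses $B(\bv,\bmu)=\int_\Omega\bigl[\chi(\bvarepsilon(\bv))+\chi^\ast(\bmu)\bigr]\,dx$ with quadratic growth, for which your patched hypothesis fails. So as written the proof does not go through under the stated assumptions.

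The repair is a different decomposition of the coupling term: the paper takes
$g\bigl((u,\sigma),(v,\mu)\bigr)=B(v,\mu)-B(u,\sigma)+\dots$, i.e.\ a single difference rather than the sum of the two differences. Then $B(v,\mu)$ is constant in $w$, $-B(u,\sigma)$ is weakly upper semicontinuous precisely because $B$ is jointly convex and l.s.c., and $(v,\mu)\mapsto B(v,\mu)$ is jointly convex, so both (b) and (c) hold with no extra hypotheses; moreover $g(w,w)=0$ and the same diagonal substitutions $\mu=\sigma_0$, $v=u_0$ still return the two inequalities of \eqref{eqn1} because $J^0(\cdot\,;0)=L^0(\cdot\,;0)=0$. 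Two smaller remarks: you should include the indicator functions $I_K+I_\Lambda$ in $\varphi$ (as the paper does), since the first branch of Theorem~\ref{Mosco} asserts the inequality for all $z\in E$, not only $z\in\mathcal K$; and your verification of the remaining limits (compactness of $\gamma_1,\gamma_2$, upper semicontinuity of $(x,y)\mapsto h^0(x;y)$, weak continuity and boundedness of $H$, and ${\bf (H_F^1)}$, ${\bf (H_G^1)}$) matches the paper and is fine.
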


\begin{proof}
Let $E:=X\times Y$, $\mathcal{K}:=K\times \Lambda$ and define $\varphi\colon \mathcal{K}\to(-\infty,\infty]$ by
$$
\varphi(u,\sigma):=\phi(u)+I_K(u)+I_\Lambda(\sigma),
$$
where $I_K$ and $I_\Lambda$ are the {\it indicator functions} of $K$ and $\Lambda$, respectively, i.e.,
$$
I_K(u):=\left\{
\begin{array}{ll}
0, &\mbox{ if }u\in K\\
+\infty, & \mbox{ otherwise}
\end{array}
\right.,\mbox{ and }
I_\Lambda(\sigma):=\left\{
\begin{array}{ll}
0, &\mbox{ if }\sigma\in \Lambda\\
+\infty, & \mbox{ otherwise}
\end{array}
\right..
$$
Recall that $K$ and $\Lambda$ are nonempty, bounded, closed and convex subsets of real reflexive Banach spaces $X$ and $Y$, respectively, it follows at once that $\mathcal{K}$ is weakly  compact, while $\varphi$ is proper, convex and lower semicontinuous with $D(\varphi)=(K\cap D(\phi))\times\Lambda$.

Let us consider the bifunction $g\colon \mathcal{K}\times \mathcal{K}\to\mathbb{R}$ by
\begin{align*}
g((u,\sigma) ,(v,\mu)):= & B(v,\mu)-B(u,\sigma)+J^0(\gamma_1(u);\gamma_1(v-u))- \langle F(u),v-u\rangle\\
&+H(\sigma)L^0(\gamma_2(\sigma);\gamma_2(\mu-\sigma))- \langle G(\sigma),\mu-\sigma \rangle.
\end{align*}
 It is not difficult to observe that
 $g((u,\sigma),(u,\sigma))=0$ for all $(u,\sigma)\in \mathcal{K}$. Let $t\in(0,1)$ and $(v_1,\mu_1),(v_2,\mu_2)\in \mathcal K$ be arbitrary and $v_t=tv_1+(1-t)v_2$ and $\mu_t=t\mu_1+(1-t)\mu_2$. From the convexity of $\mathcal K\ni (v,\mu)\mapsto B(v,\mu)\in \mathbb R$ and the positive homogeneity and subadditivity of $v\mapsto J^0(\gamma_1(u);\gamma_1(v))$ and  $\mu\mapsto L^0(\gamma_2(\sigma);\gamma_1(\mu))$, we have
 \begin{align*}
 g((u,\sigma)& ,(v_t,\mu_t))= B(v_t,\mu_t)-B(u,\sigma)+J^0(\gamma_1(u);\gamma_1(v_t-u))- \langle F(u),v_t-u\rangle\\
&+H(\sigma)L^0(\gamma_2(\sigma);\gamma_2(\mu_t-\sigma))- \langle G(\sigma),\mu_t-\sigma \rangle\\
\le &tB(v_1,\mu_1)+(1-t)B(v_2,\mu_2)+J^0(\gamma_1(u);t\gamma_1(v_1-u)+(1-t)\gamma_1(v_2-u))\\
&+H(\sigma)L^0(\gamma_2(\sigma);t\gamma_2(\mu_1-\sigma)+(1-t)\gamma_2(\mu_2-\sigma))-B(u,\sigma)\\
&- \langle F(u),t(v_1-u)+(1-t)(v_2-u)\rangle- \langle G(\sigma),t(\mu_1-\sigma)+(1-t)(\mu_2-\sigma) \rangle\\
\le&t\bigg[B(v_1,\mu_1)+J^0(\gamma_1(u);\gamma_1(v_1-u)) +H(\sigma)L^0(\gamma_2(\sigma);\gamma_2(\mu_1-\sigma))-B(u,\sigma)\\
&- \langle F(u),v_1-u\rangle- \langle G(\sigma),\mu_1-\sigma\rangle\bigg] +(1-t)\bigg[B(v_2,\mu_2)+J^0(\gamma_1(u);\gamma_1(v_2-u)) \\ &+H(\sigma)L^0(\gamma_2(\sigma);\gamma_2(\mu_2-\sigma))- \langle F(u),v_2-u\rangle- \langle G(\sigma),\mu_2-\sigma\rangle-B(u,\sigma)\bigg]\\
\le&tg((u,\sigma) ,(v_1,\mu_1))+(1-t)g((u,\sigma) ,(v_2,\mu_2)).
 \end{align*}
 This means that the mapping $(v,\mu)\mapsto g((u,\sigma),(v,\mu))$ is convex.

We claim that for each $(v,\mu)\in\mathcal K$ the mapping   $\mathcal K\ni (u,\sigma)\mapsto g((u,\sigma),(v,\mu))\in \mathbb R$ is weakly upper semicontinuous. Let sequence $\{(u_n,\sigma_n)\}\subset \mathcal K$ and $(u,\sigma)\in \mathcal K$ be such that
$$
(u_n,\sigma_n)\rightharpoonup(u,\sigma)\mbox{ in $X\times Y $ as $n\to\infty$.}
$$
It follows from the weakly lower semicontinuity of $\mathcal K\ni (u,\sigma)\mapsto B(u,\sigma)\in\mathbb R$, the upper semicontinuity of $Z_1\times Z_1\ni (u,v)\mapsto J^0(u;v)\in\mathbb R$ (see Proposition \ref{CCV-Prop2}) that
\begin{align}\label{eqns3.1}
\left\{\begin{array}{ll}
\liminf_{n\to\infty}B(u_n,\sigma_n)\ge B(u,\sigma),\\ \limsup_{n\to\infty}J^0(\gamma_1(u_n);\gamma_1(v-u_n))\le J^0(\gamma_1(u);\gamma_1(v-u)),
\end{array}\right.
\end{align}
where we have used the compactness of $\gamma_1$. Using hypotheses  ${\bf (H_F^1)} $ and ${\bf (H_G^1)}$, it yields
\begin{eqnarray}\label{eqns3.2}
    \liminf_{n\to\infty}\langle F(u_n) ,v-u_n\rangle\ge\langle F(u) ,v-u\rangle
    \end{eqnarray}
and
\begin{eqnarray}\label{eqns3.3}
\liminf_{n\to\infty}\langle  G(\sigma_n), \mu-\sigma_n\rangle\ge\langle  G(\sigma), \mu-\sigma\rangle.
\end{eqnarray}
Employing hypotheses ${\bf (H_0)}$(iii)-(iv) and the upper semicontinuity of the mapping  $Z_2\times Z_2\ni (\sigma,\mu)\mapsto L^0(\sigma;\mu)\in\mathbb R$ (see Proposition \ref{CCV-Prop2}), one has
\begin{align}\label{eqns3.4}
\nonumber \limsup_{n\to\infty}H(\sigma_n)L^0(\gamma_2(\sigma_n);\gamma_2(\mu-\sigma_n))
\le&
\limsup_{n\to\infty} \underbrace{|H(\sigma_n)-H(\sigma)|}_{\to 0} \underbrace{L^0(\gamma_2(\sigma_n);\gamma_2(\mu-\sigma_n))}_{bounded}\nonumber\\
&+\limsup_{n\to\infty}H(\sigma)L^0(\gamma_2(\sigma_n);\gamma_2(\mu-\sigma_n))\nonumber\\
\le&
\nonumber\limsup_{n\to\infty}H(\sigma)L^0(\gamma_2(\sigma_n);\gamma_2(\mu-\sigma_n))\nonumber\\
\le&H(\sigma)L^0(\gamma_2(\sigma);\gamma_2(\mu-\sigma)),
\end{align}
where we have applied the compactness of $\gamma_2$ and the following estimates
\begin{align*}
|L^0(\gamma_2(\sigma_n); \gamma_2(\mu-\sigma_n))|& =\left|\max_{\zeta\in\partial_C J(\gamma_2(\sigma_n))}\langle \zeta,\gamma_2(\mu-\sigma_n)\rangle\right|=|\langle \zeta_n,\gamma_2(\mu)-\gamma_2(\sigma_n)\rangle|\\
&\leq \|\zeta_n\|_{Z_2^*} \|\gamma_2(\mu)-\gamma_2(\sigma_n)\|\leq c_\mu \|\gamma_2(\mu)-\gamma_2(\sigma_n)\|.
\end{align*}
The last inequality is obtained by using the fact that $\gamma_2(\sigma_n)\to \gamma_2(\sigma)$ as $n\to\infty$ and Proposition~\ref{prop3}, and $c_\sigma>0$ is the Lipschitz constant of $L$ near the point $\gamma_2\sigma\in Z_2$.
Taking into account (\ref{eqns3.1})--(\ref{eqns3.4}), we infer that
\begin{eqnarray*}
\limsup_{n\to\infty}g((u_n,\sigma_n),(v,\mu))\le g((u,\sigma),(v,\mu)),
\end{eqnarray*}
namely, for each $(v,\mu)\in\mathcal K$ the mapping   $\mathcal K\ni (u,\sigma)\mapsto g((u,\sigma),(v,\mu))\in \mathbb R$ is weakly upper semicontinuous.

Therefore, all conditions of Theorem \ref{Mosco} are verified. So, we are now in position to apply this theorem with $a:=0$, $h:=g$ and $E$ endowed with the weak topology to conclude that there exists an element
$(u_0,\sigma_0)\in \mathcal{K}\cap D(\varphi)$ such that
\begin{equation}\label{BoundedSol1}
g((u_0,\sigma_0),(v,\mu))+\varphi(v,\mu)-\varphi(u_0,\sigma_0)\geq 0\mbox{ for all } (v,\mu)\in E.
\end{equation}
Choosing $\mu:=\sigma_0$ in \eqref{BoundedSol1}, we get that
$$
B(v,\sigma_0)-B(u_0,\sigma_0)+J^0(\gamma_1(u_0);\gamma_1(v-u_0))-\langle F(u_0),v-u_0\rangle +\phi(v)-\phi(u_0)\geq 0
$$
for all $v\in K$. Whereas, if we take
 $v:=u_0$ in \eqref{BoundedSol1}, then
$$
B(u_0,\mu)-B(u_0,\sigma_0)+H(\sigma_0)L^0(\gamma_2(\sigma_0);\gamma_2(\mu-\sigma_0))-\langle G(\sigma_0),\mu-\sigma_0\rangle\geq 0
$$
for all  $\mu\in \Lambda$.
This shows that $(u_0,\sigma_0)$ is a solution of problem (\ref{eqn1}).
\qed
\end{proof}

In hypotheses ${\bf (H_F^1)}$ and ${\bf (H_G^1)}$, we require that  $u\mapsto \langle F(u) ,v-u\rangle $  and $\sigma\mapsto \langle G(\sigma) ,\mu-\sigma\rangle $ are both weakly lower semicontinuous. However, this assumption strictly limits the application of our result, Lemma~\ref{BoundedCase1}. In order to extend the scope of applications to problem (\ref{eqn1}), we, further, make necessary changes to the assumptions on $F$ and $G$, respectively. Under these changes, we obtain a wide variety of existence results to problem (\ref{eqn1}).

To this end, we impose the following assumptions to functions $F$ and $G$.
\begin{description}
\item ${\bf (H_F^2)}$ $F\colon X\to X^\ast$ is hemicontinuous and there exists a concave functional $\beta_F:X\to\mathbb{R}$ with $\beta_F(0)\leq 0$  such that  $F$ is  $\beta_F$-relaxed  dissipative.

\item ${\bf (H_G^2)}$ $G\colon Y\to Y^\ast$ is hemicontinuous and there exists a concave functional $\beta_G\colon Y\to\mathbb{R}$  with $\beta_G(0)\leq 0$ such that $G$ is $\beta_G$-relaxed dissipative.

\item ${\bf (H_F^3)}$ $F\colon X\to X^\ast$ is  hemicontinuous and there exists a concave functional $\beta_T\colon X\to\mathbb{R}$ with $\beta_T(0)\leq 0$  such that  the set-valued operator $T\colon X\to 2^{X^\ast}$ defined by
 $$
 T(u):=F(u)-\gamma_1^\ast \partial_C J(\gamma_1(u))
 $$
 is $\beta_T$-relaxed dissipative.

\item ${\bf (H_G^3)}$ $G\colon Y\to Y^\ast$ is hemicontinuous and there exists a concave functional $\beta_S\colon Y\to\mathbb{R}$ with $\beta_S(0)\leq 0$  such that the set-valued operator $S\colon Y\to 2^{Y^\ast}$ defined by
$$
S(\sigma):=G(\sigma)-H(\sigma)\gamma_2^\ast \partial_C L(\gamma_2(\sigma))
$$
is $\beta_S$-relaxed  dissipative.
\end{description}

The second result regarding the existence of solutions for system \eqref{eqn1} in the case of bounded constraint sets is stated below.

\begin{lemma}\label{BoundedSol2}
The conclusion of Lemma \ref{BoundedCase1} still holds if we replace ${\bf (H_F^1)}$ by ${\bf (H_F^j)}$ or  ${\bf (H_G^1)}$ by ${\bf (H_G^k)}$, with $j,k\in\{1,2,3\}$.
\end{lemma}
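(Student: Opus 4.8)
My plan is to apply the alternative Theorem~\ref{Mosco} once more, exactly in the spirit of Lemma~\ref{BoundedCase1}, but now with two \emph{distinct} bifunctions $g\ge h$ on $E\times E$, keeping $E=X\times Y$, $\mathcal{K}=K\times\Lambda$ (weakly compact) and $\varphi(u,\sigma)=\phi(u)+I_K(u)+I_\Lambda(\sigma)$ as there. I let $h$ be the \emph{original} bifunction
\begin{align*}
h((u,\sigma),(v,\mu)):=\ & B(v,\mu)-B(u,\sigma)+J^0(\gamma_1(u);\gamma_1(v-u))-\langle F(u),v-u\rangle\\
& +H(\sigma)L^0(\gamma_2(\sigma);\gamma_2(\mu-\sigma))-\langle G(\sigma),\mu-\sigma\rangle ,
\end{align*}
which, just as in Lemma~\ref{BoundedCase1}, is convex in $(v,\mu)$ and satisfies $h((u,\sigma),(u,\sigma))=0$, so that the second branch of Theorem~\ref{Mosco} with $a=0$ is automatically excluded. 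The function $g$ will be a \emph{Minty-regularized} version of $h$, obtained by evaluating the nonlinear operator at the test point rather than at the running point in every block where the weak upper semicontinuity of Lemma~\ref{BoundedCase1} is no longer available: I replace $F(u)$ by $F(v)$ when $j\in\{2,3\}$ and $G(\sigma)$ by $G(\mu)$ when $k\in\{2,3\}$, at the cost of a relaxed-dissipativity correction, leaving the blocks with $j=1$ or $k=1$ equal to those of $h$ (these are already weakly upper semicontinuous by ${\bf (H_F^1)}$, ${\bf (H_G^1)}$). As $F$ and $G$ enter independently, I only describe one block.

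Under ${\bf (H_F^2)}$, relaxed dissipativity of $F$ yields $-\langle F(u),v-u\rangle\le-\langle F(v),v-u\rangle+\beta_F(v-u)$, so the $F$-block of $g$ becomes $J^0(\gamma_1(u);\gamma_1(v-u))-\langle F(v),v-u\rangle+\beta_F(v-u)$. Under ${\bf (H_F^3)}$ I regularize $J^0$ and $F$ \emph{together}: writing $J^0(\gamma_1(u);\gamma_1(v-u))=\langle\gamma_1^\ast\xi_u,v-u\rangle$ for a maximizer $\xi_u\in\partial_C J(\gamma_1(u))$ and invoking $\beta_T$-relaxed dissipativity of $T$ on its selections $F-\gamma_1^\ast\xi$, I obtain
\begin{align*}
& J^0(\gamma_1(u);\gamma_1(v-u))-\langle F(u),v-u\rangle\\
&\qquad\le J^0(\gamma_1(v);\gamma_1(v-u))-\langle F(v),v-u\rangle+\beta_T(v-u),
\end{align*}
whose right-hand side is the $F$-block of $g$. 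The cases ${\bf (H_G^2)}$, ${\bf (H_G^3)}$ are entirely analogous, the only new ingredient being the weakly continuous factor $H$, which is handled by working with the selections $G-H(\cdot)\gamma_2^\ast\xi$ of $S$. By construction $g\ge h$, while the weak upper semicontinuity of $(u,\sigma)\mapsto g((u,\sigma),(v,\mu))$ follows because the non-regularized terms are treated as in Lemma~\ref{BoundedCase1} (compactness of $\gamma_1,\gamma_2$, weak continuity of $H$, Proposition~\ref{CCV-Prop2}), the new Minty terms $-\langle F(v),v-u\rangle$ and $J^0(\gamma_1(v);\gamma_1(v-u))$ are in fact weakly continuous in $u$ (the first is affine and continuous, the second is the continuous sublinear map $J^0(\gamma_1(v);\cdot)$ composed with the weak-to-strong continuous map $u\mapsto\gamma_1(v-u)$), and the corrections are weakly upper semicontinuous by concavity. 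Theorem~\ref{Mosco} then produces $(u_0,\sigma_0)\in(K\cap D(\phi))\times\Lambda$ with $g((u_0,\sigma_0),(v,\mu))+\varphi(v,\mu)-\varphi(u_0,\sigma_0)\ge0$ for all $(v,\mu)\in E$, a \emph{Minty-type} inequality.

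Next I would recover \eqref{eqn1}. Choosing $\mu=\sigma_0$ isolates the first line and $v=u_0$ the second, the off-diagonal blocks vanishing because $\beta(0)=0$ and $J^0(\cdot;0)=L^0(\cdot;0)=0$. A non-regularized block then gives at once the corresponding inequality of \eqref{eqn1}. For a regularized block I insert the segment $v_t:=u_0+t(w-u_0)\in K$ (with $t\in(0,1)$, $w\in K$), divide by $t$ and let $t\to0^+$: convexity of $B(\cdot,\sigma_0)$ and of $\phi$ bounds the difference quotients from above by $B(w,\sigma_0)-B(u_0,\sigma_0)$ and $\phi(w)-\phi(u_0)$, positive homogeneity and upper semicontinuity of $J^0$ (Proposition~\ref{prop3}) control the $J^0$-term, and hemicontinuity of $F$ gives $\langle F(v_t),w-u_0\rangle\to\langle F(u_0),w-u_0\rangle$; the second line is handled identically, using in addition the weak continuity of $H$ and hemicontinuity of $G$. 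Passing to the $\limsup$ then yields the first (respectively second) inequality of \eqref{eqn1}.

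The one genuinely delicate point — and where I expect the main obstacle — is the correction term that survives division by $t$, namely $\beta_F(t(w-u_0))/t$ (or $\beta_T,\beta_G,\beta_S$): since \eqref{eqn1} carries no such term, I must ensure its $\limsup$ as $t\to0^+$ is $\le0$. This does \emph{not} follow from $\beta_F(0)\le0$ and concavity alone, as a nonzero continuous linear $\beta_F$ already shows. The remedy is that relaxed dissipativity is insensitive to swapping its two arguments, so the same block is also bounded by the expression with $\beta_F(-(v-u))$ in place of $\beta_F(v-u)$; hence I build $g$ with the symmetrized correction $z\mapsto\min\{\beta_F(z),\beta_F(-z)\}$, which keeps $g\ge h$ and is still weakly upper semicontinuous. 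Testing relaxed dissipativity at equal arguments forces $\beta_F(0)=0$, so concavity gives $\beta_F(z)+\beta_F(-z)\le2\beta_F(0)=0$, whence the symmetrized correction is nonpositive; its value at $z=t(w-u_0)$ is therefore $\le0$, its $\limsup$ after division by $t$ is $\le0$, and the term drops out. The same reasoning disposes of $\beta_T,\beta_G,\beta_S$ and completes the reduction to \eqref{eqn1}.
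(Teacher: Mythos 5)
Your strategy is the same as the paper's in all essential respects -- Mosco's alternative on $E=X\times Y$ with the same $\varphi$, a Minty-regularized bifunction $g$ obtained by moving $F,G$ (and, under ${\bf (H^3)}$, the generalized derivatives) to the test point, the inequality $g\ge h$ supplied by relaxed dissipativity through selections of $T$ and $S$, and recovery of \eqref{eqn1} along line segments using hemicontinuity and upper semicontinuity of $J^0$, $L^0$. The one structural difference is where you lodge the correction terms $\beta_F,\beta_T,\beta_G,\beta_S$: you add them to $g$ and keep $h$ equal to the original bifunction, whereas the paper keeps $g$ free of them and instead \emph{subtracts} them from $h$. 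The paper's placement is the more economical one: with $-\beta(\cdot)$ sitting in $h$, concavity of $\beta$ is exactly what makes $h$ convex in the second argument, the diagonal condition becomes $h((u,\sigma),(u,\sigma))=-\beta_F(0)-\beta_S(0)\ge0$ directly from $\beta(0)\le0$, and -- crucially -- the Minty inequality produced by Mosco carries no $\beta$ term at all, so the delicate $\limsup_{t\to0^+}\beta(t(w-u_0))/t$ question you spend your last paragraph on never arises. Your symmetrization fix for that question is correct as far as it goes: relaxed dissipativity is invariant under swapping arguments, testing at $x=y$ forces $\beta(0)=0$, and concavity gives $\min\{\beta(z),\beta(-z)\}\le\tfrac12(\beta(z)+\beta(-z))\le\beta(0)=0$, so the symmetrized correction is nonpositive and drops out after division by $t$. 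So your route does close, but at the price of an extra trick the paper does not need.

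There is, however, one step in your version that is not justified by the hypotheses as stated. Because you place the correction in $g$, Mosco's condition (c) requires $u\mapsto\tilde\beta_F(v-u)$ (and its analogues) to be weakly upper semicontinuous, and you assert this follows ``by concavity.'' It does not: ${\bf (H_F^2)}$--${\bf (H_G^3)}$ only assume $\beta$ is a finite concave functional, and a finite concave function on an infinite-dimensional Banach space need not be upper semicontinuous (a discontinuous linear functional is already a counterexample), so concavity alone does not yield weak upper semicontinuity. You would need to add the (harmless in practice, but unstated) assumption that the $\beta$'s are upper semicontinuous, or else adopt the paper's placement of the corrections in $h$, where only their concavity (i.e., convexity of $-\beta$ in the second argument) is ever used.
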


\begin{proof}
As in the proof of Lemma \ref{BoundedCase1}, let  $E:=X\times Y$, $\mathcal{K}:=K\times \Lambda$ and $\varphi\colon\mathcal{K}\to (-\infty,\infty]$ be defined by
$$
\varphi(u,\sigma):=\phi(u)+I_K(u)+I_\Lambda(\sigma).
$$

\begin{description}
\item {\sc Case 1.} {\it Assume ${\bf (H_F^2)}$ and ${\bf (H_G^3)}$ hold.}

Let us define the functions $g,h\colon \mathcal{K}\times\mathcal{K}\to \mathbb{R}$ by \begin{align*}
g((u,\sigma),(v,\mu)):=& B(v,\mu)-B(u,\sigma)+J^0(\gamma_1(u);\gamma_1(v-u))-\langle F(v),v-u\rangle \\
 & + H(\mu)L^0(\gamma_2(\mu);\gamma_2(\mu-\sigma))-\langle G(\mu),\mu-\sigma\rangle
\end{align*}
and
\begin{align*}
h((u&,\sigma),(v,\mu)):= B(v,\mu)-B(u,\sigma)+J^0(\gamma_1(u);\gamma_1(v-u))-\langle F(u),v-u\rangle \\
 & -\beta_F(v-u)+ H(\sigma)L^0(\gamma_2(\sigma);\gamma_2(\mu-\sigma))-\langle G(\sigma),\mu-\sigma\rangle -\beta_S(\mu-\sigma).
\end{align*}
Note that $h((u,\sigma),(u,\sigma))=-\beta_F(0)-\beta_S(0)\geq 0$ for all $(u,\sigma)\in\mathcal{K}$. Arguing as in the proof of Lemma~\ref{BoundedCase1}, it is readily seen that $\mathcal K\ni (u,\sigma)\mapsto g((u,\sigma),(v,\mu))\in \mathbb R$ is weakly upper semicontinuous and $\mathcal K\ni (v,\mu)\mapsto h((u,\sigma),(v,\mu))\in\mathbb R$ is convex.  Moreover, hypothesis ${\bf (H_G^3)}$ and Proposition \ref{CCV-Prop2} point out that there exists $\xi_\sigma\in\partial_C L(\gamma_2(\sigma))$ and $\xi_\mu\in\partial_C L(\gamma_2(\mu))$ such that
$$
L^0(\gamma_2(\sigma);\gamma_2(\mu-\sigma))=\langle \xi_\sigma, \gamma_2(\mu-\sigma)\rangle_{Z_2^\ast\times Z_2}=\langle\gamma_2^\ast (\xi_\sigma),\mu-\sigma\rangle_{Y^\ast\times Y},
$$
and
$$
L^0(\gamma_2(\mu);\gamma_2(\mu-\sigma))=\langle \xi_\mu, \gamma_2(\mu-\sigma)\rangle_{Z_2^\ast\times Z_2}=\langle\gamma_2^\ast( \xi_\mu),\mu-\sigma\rangle_{Y^\ast\times Y}.
$$
Thus,
\begin{align*}
g((u,\sigma),(v,\mu)) -h((u,&\sigma) ,(v,\mu))=\beta_F(v-u) -\langle F(v)-F(u),v-u\rangle +\beta_S(\mu-\sigma)\\
& -\langle G(\mu)-H(\mu)\gamma_2^\ast(\xi_\mu)-G(\sigma)+H(\sigma)\gamma_2^\ast(\xi_\sigma) ,\mu-\sigma \rangle\geq 0.
\end{align*}

We apply again Theorem \ref{Mosco} with $a:=0$ to get the existence of $(u_0,\sigma_0)\in \mathcal{K}\cap D(\varphi)$ such that
\begin{equation}\label{BoundedSol2s}
g((u_0,\sigma_0),(w,\lambda))+\varphi(w,\lambda)-\varphi(u_0,\sigma_0)\geq 0\mbox{ for all } (w,\lambda)\in E.
\end{equation}
Let $(v,\mu)\in \mathcal{K}$ be fixed. Then for any $t\in (0,1)$, by inserting $(w,\lambda):=(u_0+t(v-u_0),\sigma_0)$  in \eqref{BoundedSol2s} we get
\begin{eqnarray*}
0&\leq& B(u_0+t(v-u_0),\sigma_0)-B(u_0,\sigma_0)+  J^0(\gamma_1(u_0);t(\gamma_1(v-u_0)))\\
&&-\langle F(u_0+t(v-u_0)),t(v-u_0)\rangle +\phi(u_0+t(v-u_0))-\phi(u_0)\\
&\leq& t\left[ B(v,\sigma_0)-B(u_0,\sigma_0) +J^0(\gamma_1(u_0);\gamma_1(v-u_0))\right.\\
&& \left. -\langle F(u_0+t(v-u_0)),v-u_0\rangle+\phi(v)-\phi(u_0) \right],
\end{eqnarray*}
where we have utilized the convexity of $B$ and $\phi$ as well as the positivity homogeneity and subadditivity of $v\mapsto J^0(u;v)$.  Dividing by $t>0$ and then letting $t\to 0_+$, we apply the hemicontinuity of $F$ to get the first inequality of (\ref{eqn1}). In order to get the second inequality of (\ref{eqn1}), we fix $t\in (0,1)$ and choose $(w,\lambda):=(u_0,\sigma_0+t(\mu-\sigma_0))$ in \eqref{BoundedSol2s} to get
\begin{eqnarray*}
0&\leq& B(u_0,\sigma_0+t(\mu-\sigma_0))-B(u_0,\sigma_0)-\langle G(\sigma_0+t(\mu-\sigma_0)),t(\mu-\sigma_0)\rangle  \\
&&+H(\sigma_0+t(\mu-\sigma_0))L^0(\gamma_2(\sigma_0+t(\mu-\sigma_0));t\gamma_2(\mu-\sigma_0))\\
&\leq& t\left[ B(u_0,\mu)-B(u_0,\sigma_0) -\langle G(\sigma_0+t(\mu-\sigma_0)),\mu-\sigma_0\rangle \right.\\
&& \left. +H(\sigma_0+t(\mu-\sigma_0))L^0(\gamma_2(\sigma_0+t(\mu-\sigma_0));\gamma_2(\mu-\sigma_0)) \right].
\end{eqnarray*}
Dividing again by $t>0$, then taking the $\limsup$ as $t\to 0_+$ we get that $(u_0,\sigma_0)$ also satisfies the second inequality of  (\ref{eqn1}).

\item {\sc Case 2.} {\it Assume that ${\bf (H_F^2)}$ and ${\bf (H_G^2)}$ hold.}

We define the functions $g,h\colon \mathcal{K}\times\mathcal{K}\to \mathbb{R}$ by
\begin{align*}
g((u,\sigma),(v,\mu)):=& B(v,\mu)-B(u,\sigma)+J^0(\gamma_1(u);\gamma_1(v-u))-\langle F(v),v-u\rangle \\
 & + H(\sigma)L^0(\gamma_2(\sigma);\gamma_2(\mu-\sigma))-\langle G(\mu),\mu-\sigma\rangle
\end{align*}
and
\begin{align*}
h((u,\sigma),(v,\mu)):=& B(v,\mu)-B(u,\sigma)+J^0(\gamma_1(u);\gamma_1(v-u))-\langle F(u),v-u\rangle-\beta_F(v-u) \\
 & + H(\sigma)L^0(\gamma_2(\sigma);\gamma_2(\mu-\sigma))-\langle G(\sigma),\mu-\sigma\rangle -\beta_G(\mu-\sigma).
\end{align*}
It is obvious that  $h((u,\sigma),(u,\sigma))=-\beta_F(0)-\beta_S(0)\geq 0$ for all $(u,\sigma)\in\mathcal{K}$. Moreover, the mapping  $\mathcal K\ni (u,\sigma)\mapsto g((u,\sigma),(v,\mu))\in \mathbb R$ is weakly upper semicontinuous and $\mathcal K\ni (v,\mu)\mapsto h((u,\sigma),(v,\mu))\in\mathbb R$ is convex. Additionally, we apply hypotheses ${\bf (H_F^2)}$ and ${\bf (H_G^2)}$ to get
\begin{align*}
g((u,\sigma),(v,\mu)) -h((u,\sigma) ,(v,\mu))=&\beta_F(v-u) -\langle F(v)-F(u),v-u\rangle +\beta_G(\mu-\sigma)\\
& -\langle G(\mu)-G(\sigma) ,\mu-\sigma \rangle\\
\geq& 0.
\end{align*}

This allows us to employ Theorem \ref{Mosco} with $a:=0$ to find $(u_0,\sigma_0)\in \mathcal{K}\cap D(\varphi)$ such that (\ref{BoundedSol2s}) holds.
Let $(v,\mu)\in \mathcal{K}$ be fixed. Inserting $(w,\lambda):=(u_0+t(v-u_0),\sigma_0)$ with $t\in(0,1)$ in \eqref{BoundedSol2s}  we get
\begin{eqnarray*}
0&\leq&  t\left[ B(v,\sigma_0)-B(u_0,\sigma_0) +J^0(\gamma_1(u_0);\gamma_1(v-u_0))\right.\\
&& \left. -\langle F(u_0+t(v-u_0)),v-u_0\rangle+\phi(v)-\phi(u_0) \right].
\end{eqnarray*}
Hence, from the hemicontinuity of $F$, we obtain the first inequality of (\ref{eqn1}). Let $t\in (0,1)$ be arbitrary and put $(w,\lambda):=(u_0,\sigma_0+t(\mu-\sigma_0))$  in \eqref{BoundedSol2s}. We have
\begin{eqnarray*}
0&\leq& B(u_0,\sigma_0+t(\mu-\sigma_0))-B(u_0,\sigma_0)-\langle G(\sigma_0+t(\mu-\sigma_0)),t(\mu-\sigma_0)\rangle  \\
&&+H(\sigma_0)L^0(\gamma_2(\sigma_0);t(\gamma_2(\mu-\sigma_0)))\\
&\leq& t\left[ B(u_0,\mu)-B(u_0,\sigma_0) -\langle G(\sigma_0+t(\mu-\sigma_0)),\mu-\sigma_0\rangle +H(\sigma_0)L^0(\gamma_2(\sigma_0);\gamma_2(\mu-\sigma_0)) \right].
\end{eqnarray*}
Invoking the hemicontinuity of $G$, we obtain the second inequality of  (\ref{eqn1}).

\item{\sc Case 3.} {\it Suppose that ${\bf (H_F^3)}$ and ${\bf (H_G^2)}$ are satisfied.}

Let $g,h\colon \mathcal{K}\times\mathcal{K}\to \mathbb{R}$  be defined by
\begin{align*}
g((u,\sigma),(v,\mu)):=& B(v,\mu)-B(u,\sigma)+J^0(\gamma_1(v);\gamma_1(v-u))-\langle F(v),v-u\rangle \\
 & + H(\sigma)L^0(\gamma_2(\sigma);\gamma_2(\mu-\sigma))-\langle G(\mu),\mu-\sigma\rangle
\end{align*}
and
\begin{align*}
h((u,\sigma),(v,\mu)):=& B(v,\mu)-B(u,\sigma)+J^0(\gamma_1(u);\gamma_1(v-u))-\langle F(u),v-u\rangle-\beta_T(v-u) \\
 & + H(\sigma)L^0(\gamma_2(\sigma);\gamma_2(\mu-\sigma))-\langle G(\sigma),\mu-\sigma\rangle -\beta_G(\mu-\sigma).
\end{align*}

It is obvious that  $h((u,\sigma),(u,\sigma))=-\beta_T(0)-\beta_S(0)\geq 0$ for all $(u,\sigma)\in\mathcal{K}$. Again,  $\mathcal K\ni (u,\sigma)\mapsto g((u,\sigma),(v,\mu))\in \mathbb R$ is weakly upper semicontinuous and $\mathcal K\ni (v,\mu)\mapsto h((u,\sigma),(v,\mu))\in\mathbb R$ is convex. Additionally, we use hypotheses ${\bf (H_F^3)}$ and ${\bf (H_G^2)}$ to get
\begin{align*}
g((u,\sigma),(v,\mu)) -h((u,\sigma) ,(v,\mu))=&\beta_T(v-u) -\langle F(v)-\gamma_1^*\xi_u-F(u)+\gamma_1^*\xi_v,v-u\rangle\\
& +\beta_G(\mu-\sigma) -\langle G(\mu)-G(\sigma) ,\mu-\sigma \rangle\\
\geq& 0,
\end{align*}
where $\xi_u\in \partial_C J(\gamma_1u)$ and $\xi_v\in \partial J_C(\gamma_1v)$ are such that
$$
J^0(\gamma_1(u);\gamma_1(v-u))=\langle \xi_u, \gamma_1(v-u)\rangle_{Z_1^\ast\times Z_1}=\langle\gamma_1^\ast( \xi_u),u-v\rangle_{X^\ast\times X},
$$
and
$$
J^0(\gamma_1(v);\gamma_1(v-u))=\langle \xi_v, \gamma_1(v-u)\rangle_{Z_1^\ast\times Z_1}=\langle\gamma_1^\ast( \xi_v),u-v\rangle_{X^\ast\times X}.
$$

We are now in a position to apply Theorem \ref{Mosco} with $a:=0$ to deduce that there exists   an element $(u_0,\sigma_0)\in \mathcal{K}\cap D(\varphi)$ such that (\ref{BoundedSol2s}) holds.
Let $(v,\mu)\in \mathcal{K}$ be fixed. Then for any $t\in (0,1)$, we insert $(w,\lambda):=(u_0+t(v-u_0),\sigma_0)$  in \eqref{BoundedSol2s} to get
\begin{eqnarray*}
0&\leq&  t\left[ B(v,\sigma_0)-B(u_0,\sigma_0) +J^0(\gamma_1(u_0+t(v-u_0));\gamma_1(v-u_0))\right.\\
&& \left. -\langle F(u_0+t(v-u_0)),v-u_0\rangle+\phi(v)-\phi(u_0) \right].
\end{eqnarray*}
Hence, from the hemicontinuity of $F$ and upper semicontinuity of $Z_1\ni u\mapsto J^0(u;v)$, we obtain the first inequality of (\ref{eqn1}). For $t\in (0,1)$ plugging  $(w,\lambda):=(u_0,\sigma_0+t(\mu-\sigma_0))$  in \eqref{BoundedSol2s} we have
\begin{eqnarray*}
0&\leq& t\left[ B(u_0,\mu)-B(u_0,\sigma_0) -\langle G(\sigma_0+t(\mu-\sigma_0)),\mu-\sigma_0\rangle +H(\sigma_0)L^0(\gamma_2(\sigma_0);\gamma_2(\mu-\sigma_0)) \right].
\end{eqnarray*}
Invoking the hemicontinuity of $G$ the second inequality of  (\ref{eqn1}) is obtained.

\item {\sc Case 4.} {\it Let ${\bf (H_F^3)}$ and ${\bf (H_G^3)}$ be fulfilled.}

Consider the functions $g,h\colon \mathcal{K}\times\mathcal{K}\to \mathbb{R}$ defined by
\begin{align*}
g((u,\sigma),(v,\mu)):=& B(v,\mu)-B(u,\sigma)+J^0(\gamma_1(v);\gamma_1(v-u))-\langle F(v),v-u\rangle \\
 & + H(\mu)L^0(\gamma_2(\mu);\gamma_2(\mu-\sigma))-\langle G(\mu),\mu-\sigma\rangle
\end{align*}
and
\begin{align*}
h((u,\sigma),(v,\mu)):=& B(v,\mu)-B(u,\sigma)+J^0(\gamma_1(u);\gamma_1(v-u))-\langle F(u),v-u\rangle-\beta_T(v-u) \\
 & + H(\sigma)L^0(\gamma_2(\sigma);\gamma_2(\mu-\sigma))-\langle G(\sigma),\mu-\sigma\rangle -\beta_S(\mu-\sigma).
\end{align*}
Then, we have  $h((u,\sigma),(u,\sigma))=-\beta_T(0)-\beta_S(0)\geq 0$ for all $(u,\sigma)\in\mathcal{K}$, the  mapping $\mathcal K\ni (u,\sigma)\mapsto g((u,\sigma),(v,\mu))\in \mathbb R$ is weakly upper semicontinuous, while the mapping $\mathcal K\ni (v,\mu)\mapsto h((u,\sigma),(v,\mu))\in\mathbb R$ is convex. Let $\xi_\sigma\in\partial_C L(\gamma_2(\sigma))$ and $\xi_\mu\in\partial_C L(\gamma_2(\mu))$ be such that
$$
L^0(\gamma_2(\sigma);\gamma_2(\mu-\sigma))=\langle \xi_\sigma, \gamma_2(\mu-\sigma)\rangle_{Z_2^\ast\times Z_2}=\langle\gamma_2^\ast (\xi_\sigma),\mu-\sigma\rangle_{Y^\ast\times Y},
$$
and
$$
L^0(\gamma_2(\mu);\gamma_2(\mu-\sigma))=\langle \xi_\mu, \gamma_2(\mu-\sigma)\rangle_{Z_2^\ast\times Z_2}=\langle\gamma_2^\ast( \xi_\mu),\mu-\sigma\rangle_{Y^\ast\times Y}.
$$
We apply hypotheses ${\bf (H_F^3)}$ and ${\bf (H_G^3)}$ to get
\begin{align*}
g((u,\sigma),(v,\mu))& -h((u,\sigma) ,(v,\mu))
=\beta_T(v-u) -\langle F(v)-\gamma_1^*\xi_u-F(u)+\gamma_1^*\xi_v,v-u\rangle \\& +\beta_S(\mu-\sigma)
 -\langle G(\mu)-H(\mu)\gamma_2^\ast(\xi_\mu)-G(\sigma)+H(\sigma)\gamma_2^\ast(\xi_\sigma) ,\mu-\sigma \rangle\\
\geq& 0,
\end{align*}
where $\xi_u\in \partial_C J(\gamma_1u)$ and $\xi_v\in \partial J_C(\gamma_1v)$ are such that
$$
J^0(\gamma_1(u);\gamma_1(v-u))=\langle \xi_u, \gamma_1(v-u)\rangle_{Z_1^\ast\times Z_1}=\langle\gamma_1^\ast( \xi_u),u-v\rangle_{X^\ast\times X},
$$
and
$$
J^0(\gamma_1(v);\gamma_1(v-u))=\langle \xi_v, \gamma_1(v-u)\rangle_{Z_1^\ast\times Z_1}=\langle\gamma_1^\ast( \xi_v),u-v\rangle_{X^\ast\times X}.
$$

We apply Theorem \ref{Mosco} with $a:=0$ to find an element $(u_0,\sigma_0)\in \mathcal{K}\cap D(\varphi)$ such that (\ref{BoundedSol2s}) is fulfilled.
Let $(v,\mu)\in \mathcal{K}$ be fixed. Then for any $t\in (0,1)$, we insert $(w,\lambda):=(u_0+t(v-u_0),\sigma_0)$  in \eqref{BoundedSol2s} to get
\begin{eqnarray*}
0&\leq&  t\left[ B(v,\sigma_0)-B(u_0,\sigma_0) +J^0(\gamma_1(u_0+t(v-u_0));\gamma_1(v-u_0))\right.\\
&& \left. -\langle F(u_0+t(v-u_0)),v-u_0\rangle+\phi(v)-\phi(u_0) \right].
\end{eqnarray*}
Hence, from the hemicontinuity of $F$ and upper semicontinuity of $Z_1\ni u\mapsto J^0(u;v)$, we obtain the first inequality of (\ref{eqn1}). Let $t\in (0,1)$ be arbitrary and put $(w,\lambda):=(u_0,\sigma_0+t(\mu-\sigma_0))$  in \eqref{BoundedSol2s}. We have
\begin{eqnarray*}
0&\leq& t\left[ B(u_0,\mu)-B(u_0,\sigma_0) -\langle G(\sigma_0+t(\mu-\sigma_0)),\mu-\sigma_0\rangle \right.\\
&& \left. +H(\sigma_0+t(\mu-\sigma_0))L^0(\gamma_2(\sigma_0+t(\mu-\sigma_0));\gamma_2(\mu-\sigma_0)) \right].
\end{eqnarray*}
Invoking the hemicontinuity of $G$ and upper semicontinuity of $Y\ni \sigma\mapsto H(\sigma)L^0(\gamma_2\sigma;\mu)$, we get the second inequality of  (\ref{eqn1}).

\item {\sc Case 5.} {\it Let ${\bf (H_F^1)}$ and ${\bf (H_G^2)}$ be fulfilled.}

Let us introduce the functions $g,h\colon \mathcal{K}\times\mathcal{K}\to \mathbb{R}$ given by
\begin{align*}
g((u,\sigma) ,(v,\mu)):= & B(v,\mu)-B(u,\sigma)+J^0(\gamma_1(u);\gamma_1(v)-\gamma_1(u))- \langle F(u),v-u\rangle\\
&+H(\sigma)L^0(\gamma_2(\sigma);\gamma_2(\mu)-\gamma_2(\sigma))- \langle G(\mu),\mu-\sigma \rangle
\end{align*}
and
\begin{align*}
h((u,\sigma),(v,\mu)):=& B(v,\mu)-B(u,\sigma)+J^0(\gamma_1(u);\gamma_1(v)-\gamma_1(u))-\langle F(u),v-u\rangle \\
 & + H(\sigma)L^0(\gamma_2(\sigma);\gamma_2(\mu)-\gamma_2(\sigma))-\langle G(\sigma),\mu-\sigma\rangle-\beta_G(\mu-\sigma)
\end{align*}
for all $(u,\sigma) ,(v,\mu)\in\mathcal K$.

Then, we have that  $h((u,\sigma),(u,\sigma))=-\beta_G(0)\geq 0$ for all $(u,\sigma)\in\mathcal{K}$,  the  mapping $\mathcal K\ni (u,\sigma)\mapsto g((u,\sigma),(v,\mu))\in \mathbb R$ is weakly upper semicontinuous, while the mapping $\mathcal K\ni (v,\mu)\mapsto h((u,\sigma),(v,\mu))\in\mathbb R$ is convex.
It follows from hypothesis ${\bf (H_G^2)}$ that
\begin{align*}
g((u,\sigma),(v,\mu)) -h((u,\sigma) ,(v,\mu))
=\beta_G(\mu-\sigma)  -\langle G(\mu)-G(\sigma),\mu-\sigma \rangle
\geq 0.
\end{align*}

We apply Theorem \ref{Mosco} with $a:=0$ to find an element $(u_0,\sigma_0)\in \mathcal{K}\cap D(\varphi)$ such that (\ref{BoundedSol2s}) is fulfilled.
Let $(v,\mu)\in \mathcal{K}$ be fixed. Then for any $t\in (0,1)$, we insert $(w,\lambda):=(v,\sigma_0)$ and  $(w,\lambda):=(u_0,\sigma_0+t(\mu-\sigma_0))$ in \eqref{BoundedSol2s}, respectively, and pass to the upper limit as $t\to 0^+$ for the second resulting inequality  to get the  (\ref{eqn1}).

\item {\sc Case 6.} {\it Let ${\bf (H_F^1)}$ and ${\bf (H_G^3)}$ be fulfilled.}

Let us introduce the functions $g,h\colon \mathcal{K}\times\mathcal{K}\to \mathbb{R}$ given by
\begin{align*}
g((u,\sigma),(v,\mu)):=& B(v,\mu)-B(u,\sigma)+J^0(\gamma_1(u);\gamma_1(v-u))-\langle F(u),v-u\rangle \\
 & + H(\mu)L^0(\gamma_2(\mu);\gamma_2(\mu-\sigma))-\langle G(\mu),\mu-\sigma\rangle
\end{align*}
and
\begin{align*}
h((u&,\sigma),(v,\mu)):= B(v,\mu)-B(u,\sigma)+J^0(\gamma_1(u);\gamma_1(v-u))-\langle F(u),v-u\rangle \\
 & + H(\sigma)L^0(\gamma_2(\sigma);\gamma_2(\mu-\sigma))-\langle G(\sigma),\mu-\sigma\rangle -\beta_S(\mu-\sigma).
\end{align*}
for all $(u,\sigma) ,(v,\mu)\in\mathcal K$.

Then, we have that  $h((u,\sigma),(u,\sigma))=-\beta_S(0)\geq 0$ for all $(u,\sigma)\in\mathcal{K}$,  the  mapping $\mathcal K\ni (u,\sigma)\mapsto g((u,\sigma),(v,\mu))\in \mathbb R$ is weakly upper semicontinuous, while the mapping $\mathcal K\ni (v,\mu)\mapsto h((u,\sigma),(v,\mu))\in\mathbb R$ is convex.
It follows from hypothesis ${\bf (H_G^3)}$ that
$g((u,\sigma),(v,\mu)) -h((u,\sigma) ,(v,\mu))
\geq 0.$

We apply Theorem \ref{Mosco} with $a:=0$ to find an element $(u_0,\sigma_0)\in \mathcal{K}\cap D(\varphi)$ such that (\ref{BoundedSol2s}) is fulfilled.
Let $(v,\mu)\in \mathcal{K}$ be fixed. Then for any $t\in (0,1)$, we insert $(w,\lambda):=(v,\sigma_0)$ and  $(w,\lambda):=(u_0,\sigma_0+t(\mu-\sigma_0))$   in \eqref{BoundedSol2s}, respectively,   and pass to the upper limit as $t\to 0^+$ for the second resulting inequality  to get the  (\ref{eqn1}).

\item {\sc Case 7.} {\it Let ${\bf (H_F^2)}$ and ${\bf (H_G^1)}$ be fulfilled.}

Let us introduce the functions $g,h\colon \mathcal{K}\times\mathcal{K}\to \mathbb{R}$ given by
\begin{align*}
g((u,\sigma),(v,\mu)):=& B(v,\mu)-B(u,\sigma)+J^0(\gamma_1(u);\gamma_1(v-u))-\langle F(v),v-u\rangle \\
 & + H(\sigma)L^0(\gamma_2(\sigma);\gamma_2(\mu-\sigma))-\langle G(\sigma),\mu-\sigma\rangle
\end{align*}
and
\begin{align*}
h((u&,\sigma),(v,\mu)):= B(v,\mu)-B(u,\sigma)+J^0(\gamma_1(u);\gamma_1(v-u))-\langle F(u),v-u\rangle \\
 &-\beta_F(v-u) + H(\sigma)L^0(\gamma_2(\sigma);\gamma_2(\mu-\sigma))-\langle G(\sigma),\mu-\sigma\rangle.
\end{align*}
for all $(u,\sigma) ,(v,\mu)\in\mathcal K$.

Then, we have that  $h((u,\sigma),(u,\sigma))=-\beta_F(0)\geq 0$ for all $(u,\sigma)\in\mathcal{K}$,  the  mapping $\mathcal K\ni (u,\sigma)\mapsto g((u,\sigma),(v,\mu))\in \mathbb R$ is weakly upper semicontinuous, while the mapping $\mathcal K\ni (v,\mu)\mapsto h((u,\sigma),(v,\mu))\in\mathbb R$ is convex.
It follows from hypothesis ${\bf (H_G^3)}$ that
$g((u,\sigma),(v,\mu)) -h((u,\sigma) ,(v,\mu))
\geq 0.$

We apply Theorem \ref{Mosco} with $a:=0$ to find an element $(u_0,\sigma_0)\in \mathcal{K}\cap D(\varphi)$ such that (\ref{BoundedSol2s}) is fulfilled.
Let $(v,\mu)\in \mathcal{K}$ be fixed. Then for any $t\in (0,1)$, we insert $(w,\lambda):=(tv+(1-t)u_0,\sigma_0)$ and  $(w,\lambda):=(u_0,\mu)$   in \eqref{BoundedSol2s}, respectively,   and pass to the upper limit as $t\to 0^+$ for the first resulting inequality  to get the  (\ref{eqn1}).

\item {\sc Case 8.} {\it Let ${\bf (H_F^3)}$ and ${\bf (H_G^1)}$ be fulfilled.}

Consider the functions $g,h\colon \mathcal{K}\times\mathcal{K}\to \mathbb{R}$ given by
\begin{align*}
g((u,\sigma),(v,\mu)):=& B(v,\mu)-B(u,\sigma)+J^0(\gamma_1(v);\gamma_1(v-u))-\langle F(v),v-u\rangle \\
 & + H(\sigma)L^0(\gamma_2(\sigma);\gamma_2(\mu-\sigma))-\langle G(\sigma),\mu-\sigma\rangle
\end{align*}
and
\begin{align*}
h((u,\sigma),(v,\mu)):=& B(v,\mu)-B(u,\sigma)+J^0(\gamma_1(u);\gamma_1(v-u))-\langle F(u),v-u\rangle \\
 &-\beta_T(v-u) + H(\sigma)L^0(\gamma_2(\sigma);\gamma_2(\mu-\sigma))-\langle G(\sigma),\mu-\sigma\rangle
\end{align*}
for all $(u,\sigma) ,(v,\mu)\in\mathcal K$.

Then, we have that  $h((u,\sigma),(u,\sigma))=-\beta_T(0)\geq 0$ for all $(u,\sigma)\in\mathcal{K}$,  the  mapping $\mathcal K\ni (u,\sigma)\mapsto g((u,\sigma),(v,\mu))\in \mathbb R$ is weakly upper semicontinuous, while the mapping $\mathcal K\ni (v,\mu)\mapsto h((u,\sigma),(v,\mu))\in\mathbb R$ is convex.
It follows from hypothesis ${\bf (H_G^3)}$ that
$g((u,\sigma),(v,\mu)) -h((u,\sigma) ,(v,\mu))
\geq 0.$

We apply Theorem \ref{Mosco} with $a:=0$ to find an element $(u_0,\sigma_0)\in \mathcal{K}\cap D(\varphi)$ such that (\ref{BoundedSol2s}) is fulfilled.
Let $(v,\mu)\in \mathcal{K}$ be fixed. Then for any $t\in (0,1)$, we insert $(w,\lambda):=(tv+(1-t)u_0,\sigma_0)$ and  $(w,\lambda):=(u_0,\mu)$   in \eqref{BoundedSol2s}, respectively,   and pass to the upper limit as $t\to 0^+$ for the first resulting inequality  to get the  (\ref{eqn1}).
\end{description}
This completes the proof of the lemma.\qed
\end{proof}

We end this section with an existence result when at least one of the constraint sets  is unbounded. To this end, we need a coercivity condition. Here, and hereafter, we consider the space $X\times Y$ is endowed with the norm $\|(u,\sigma)\|:=\sqrt{\|u\|^2_X+\|\sigma\|^2_Y}$.  We consider the following coercivity condition:

\noindent ${\bf (C)}$\quad There exist $\bar{w}_0\in K\cap D(\phi)$ and $\bar{\tau}_0\in\Lambda$ such that  $${\displaystyle \frac{2B(u,\sigma)-B(\bar{w}_0,\sigma)-B(u,\bar{\tau}_0)+\langle F(u),\bar{w}_0-u\rangle+\langle G(\sigma),\bar{\tau}_0-\sigma \rangle}{\|(u,\sigma)\|}\to \infty }, \mbox{ as }\|(u,\sigma)\|\to\infty.$$

We also consider ${\bf (H_0')}$ to be set of assumptions obtained from  ${\bf (H_0)}$ by replacing (ii) with the slightly stronger condition:

 (ii') $J\colon Z_1\to\mathbb{R}$ and $L:Z_2\to \mathbb{R}$ are locally Lipschitz functionals and there exist $\alpha_J>0$ and  $\alpha_L>0$ such that
$$
\sup_{x\in Z_1} c_x\leq \alpha_J\mbox{ and }\sup_{y\in Z_2}c_y\leq \alpha_L,
$$
where $c_x>0$ (resp. $c_y>0$)  denotes the Lipschitz constant of $J$ (resp. $L$) near the point $x\in Z_1$ (resp. $y\in Z_2$).

\begin{theorem}\label{C-Z_MainRes1}
Let $X,Y$ be real reflexive Banach spaces. Assume that $K\subseteq X$,  $\Lambda\subseteq Y$ are nonempty closed convex sets such that either $K\times \Lambda$  is bounded or ${\bf (C)}$ is satisfied.  If, in addition, ${\bf (H_0')}$, ${\bf (H_B)}$, ${\bf (H_F^j)}$, ${\bf (H_G^k)}$ with $j,k\in\{1,2,3\}$, then problem (\ref{eqn1}) possesses at least one solution in $K\times \Lambda$.
\end{theorem}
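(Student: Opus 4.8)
The plan is to reduce the unbounded case to the bounded case already settled in Lemma~\ref{BoundedSol2} by a truncation-plus-coercivity argument. If $K\times\Lambda$ is bounded there is nothing to do: Lemma~\ref{BoundedSol2} applies directly, since ${\bf (H_0')}$ is stronger than ${\bf (H_0)}$ and the remaining hypotheses coincide. So I would assume ${\bf (C)}$ holds. For $n\in\NN$ large enough that $\bar{w}_0\in\overline{B}_X(0,n)$ and $\bar{\tau}_0\in\overline{B}_Y(0,n)$, I set $K_n:=K\cap\overline{B}_X(0,n)$ and $\Lambda_n:=\Lambda\cap\overline{B}_Y(0,n)$. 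These are nonempty (they contain $\bar{w}_0$ and $\bar{\tau}_0$), bounded, closed and convex, with $D(\phi)\cap K_n\neq\varnothing$; hence Lemma~\ref{BoundedSol2}, applied with the same hypotheses ${\bf (H_F^j)},{\bf (H_G^k)}$ but with $K,\Lambda$ replaced by $K_n,\Lambda_n$, furnishes a solution $(u_n,\sigma_n)\in(K_n\cap D(\phi))\times\Lambda_n$ of the correspondingly truncated version of \eqref{eqn1}.

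The heart of the argument is an a priori bound on $(u_n,\sigma_n)$ coming from ${\bf (C)}$. Testing the first inequality of the truncated problem with $v:=\bar{w}_0\in K_n$ and the second with $\mu:=\bar{\tau}_0\in\Lambda_n$, then adding and rearranging, I obtain that the numerator $\Phi(u_n,\sigma_n)$ appearing in ${\bf (C)}$ is bounded above by
$$
\phi(\bar{w}_0)-\phi(u_n)+J^0(\gamma_1(u_n);\gamma_1(\bar{w}_0-u_n))+H(\sigma_n)L^0(\gamma_2(\sigma_n);\gamma_2(\bar{\tau}_0-\sigma_n)).
$$
Each term on the right grows at most linearly in $\|(u_n,\sigma_n)\|$: the uniform Lipschitz bounds $\alpha_J,\alpha_L$ from ${\bf (H_0')}$(ii') together with Proposition~\ref{prop3}(i) and the boundedness of the linear operators $\gamma_1,\gamma_2$ control the two Clarke directional-derivative terms (using $0<H<m_H$ for the second), while Proposition~\ref{prop2} supplies a continuous affine minorant of the proper convex lower semicontinuous $\phi$, so $-\phi(u_n)$ is dominated by $\mathrm{const}+\mathrm{const}\cdot\|u_n\|_X$. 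Consequently there are constants $a,b\ge0$, independent of $n$, with
$$
\Phi(u_n,\sigma_n)\le a+b\,\|(u_n,\sigma_n)\|.
$$
If $\|(u_n,\sigma_n)\|$ were large, dividing by $\|(u_n,\sigma_n)\|$ and invoking ${\bf (C)}$ would drive the left-hand side to $+\infty$ while the right-hand side stays bounded, a contradiction. Hence there is $M>0$, independent of $n$, with $\|(u_n,\sigma_n)\|\le M$ for every admissible $n$.

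It remains to promote a suitably chosen truncated solution to a genuine one. I would fix an integer $n_0>\max\{M,\|\bar{w}_0\|_X,\|\bar{\tau}_0\|_Y\}$, so that $\|u_{n_0}\|_X\le M<n_0$ and $\|\sigma_{n_0}\|_Y\le M<n_0$, i.e.\ $u_{n_0}$ and $\sigma_{n_0}$ lie strictly inside their balls. Given an arbitrary $v\in K$, for $t\in(0,1)$ small the convex combination $v_t:=(1-t)u_{n_0}+tv$ belongs to $K_{n_0}$ by convexity and strict interiority; inserting $v_t$ into the first truncated inequality and using the convexity of $B(\cdot,\sigma_{n_0})$ and $\phi$, the positive homogeneity of $w\mapsto J^0(\gamma_1(u_{n_0});\gamma_1 w)$, and the linearity of $\gamma_1$ and of $\langle F(u_{n_0}),\cdot\rangle$, every term becomes a multiple of $t$; dividing by $t>0$ recovers the first inequality of \eqref{eqn1} for all $v\in K$. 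The second inequality follows identically with $\mu_t:=(1-t)\sigma_{n_0}+t\mu$ for arbitrary $\mu\in\Lambda$. Thus $(u_{n_0},\sigma_{n_0})$ solves \eqref{eqn1} on $K\times\Lambda$.

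The hard part will be the a priori estimate: one must verify that \emph{all} of the nonsmooth and coupling contributions ($\phi$, the two Clarke directional derivatives, and the $H$-weighted term) grow no faster than linearly, which is precisely what the uniform Lipschitz constants in ${\bf (H_0')}$(ii'), the bound $H<m_H$, and the affine minorant of $\phi$ secure, so that the superlinear growth built into ${\bf (C)}$ can close the argument. By contrast, the final localization step is routine once the strict interiority of $(u_{n_0},\sigma_{n_0})$ has been established.
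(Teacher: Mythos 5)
Your proposal is correct and follows essentially the same route as the paper: truncate $K\times\Lambda$ by balls (the paper centers them at $\bar{w}_0,\bar{\tau}_0$ rather than at the origin, which is immaterial), solve the truncated problem via Lemma~\ref{BoundedSol2}, obtain the a priori bound by testing with $(\bar{w}_0,\bar{\tau}_0)$, summing the two inequalities, controlling $\phi$ by an affine minorant and the Clarke terms by the uniform Lipschitz constants $\alpha_J,\alpha_L$ and the bound $m_H$, and then contradicting ${\bf (C)}$; finally the strict-interiority/convex-combination argument promotes the truncated solution to a solution on all of $K\times\Lambda$. The only cosmetic difference is that you phrase the estimate as a uniform bound $M$ valid for all $n$, whereas the paper runs a contradiction argument producing a single radius $r_1$ with strict interiority — the content is identical.
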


\begin{proof}
If $K\times \Lambda$ is bounded, then the desired conclusion can be obtained directly by using Lemmas~\ref{BoundedSol1} and~\ref{BoundedSol2}.

So, we assume now that  $K\times\Lambda$ is unbounded. Let $r_0>0$ be sufficiently large such that $K_r:=K\cap \bar{B}_X(\bar{w}_0,r)$ and $\Lambda_r:=\Lambda\cap \bar{B}_Y(\bar{\tau}_0,r)$ are both nonempty for all $r\ge r_0$.  Let $r\ge r_0$ be arbitrary fixed. Next, we consider the following intermediate problem:

{\it Find $(u,\sigma)\in \left( K_r\cap D(\phi) \right)\times\Lambda_r$ such that
\begin{align}\label{eqn3.7} \left\{
\begin{array}{ll}
B(v,\sigma)-B(u,\sigma)+\phi(v)-\phi(u)+J^0(\gamma_1 u;\gamma_1 (v- u))\geq \langle F(u),v-u\rangle,\\
B(u,\mu)-B(u,\sigma)+H(\sigma)L^0(\gamma_2 \sigma;\gamma_2( \mu-\ \sigma))\geq \langle G(\sigma),\mu-\sigma \rangle,
\end{array}
\right.
\end{align}
for all $v\in K_r$ and all $\mu\in \Lambda_r$.}

\noindent
In virtue of Lemmas \ref{BoundedSol1} and \ref{BoundedSol2}, we can see that problem (\ref{eqn3.7}) has at least one solution, say  $(u_r,\sigma_r)\in K_r\times \Lambda_r$.

We claim that there exists a constant $r_1\ge r_0$ such
that each solution of problem (\ref{eqn3.7}) with $r=r_1$ satisfies the following inequality
\begin{eqnarray}\label{innine}
\max\{\|u_{r_1}-\bar{w}_0\|_X,\|\sigma_{r_1}-\bar{\tau}_0\|_Y\}<r_1.
\end{eqnarray}
Arguing by contradiction, we assume that for each $r\ge r_0$ it holds \begin{eqnarray*}
\max\{\|u_r-\bar{w}_0\|_X,\|\sigma_r-\bar{\tau_0}\|_Y\}=r.
\end{eqnarray*}
Then, plugging $(v,\mu):=(\bar{w}_0,\bar{\tau}_0)$ into (\ref{eqn3.7}) we find that
$$
\left\{
\begin{array}{l}
B(\bar{w}_0,\sigma_r)-B(u_r,\sigma_r)+\phi(\bar{w}_0)-\phi(u_r)+J^0(\gamma_1(u_r);\gamma_1(\bar{w}_0-u_r))\geq
\langle F(u_r),\bar{w}_0-u_r\rangle,
\\
B(u_r,\bar{\tau}_0)-B(u_r,\sigma_r)+H(\sigma_r)L^0(\gamma_2(\sigma_r);\gamma_2(\bar{\tau}_0-\sigma_r))\geq
\langle G(\sigma_r),\bar{\tau}-\sigma_r\rangle.
\end{array}
\right.
$$
Summing up the last two inequalities and  we get
\begin{align}\label{Coercive1}
 \phi(\bar{w}_0)&-\phi(u_r)-\langle \zeta_{u_r}, \gamma_1(\bar{w}_0-u_r)\rangle-H(\sigma_r)\langle \zeta_{\sigma_r}\gamma_2(\bar{\tau}_0-\sigma_r)\rangle\nonumber
 \geq    2B(u_r,\sigma_r)-B(u_r,\bar{\tau}_0)\\
 &-B(\bar{w}_0,\sigma_r)+\langle F(u_r),\bar{w}_0-u_r \rangle -\langle G(\sigma_r),\bar{\tau}_0-\sigma_r \rangle,
\end{align}
where $\zeta_{u_r}\in \partial_CJ(\gamma_1 (u_r))$ and $\zeta_{\sigma_r}\in \partial_CL(\gamma_2(\sigma_r))$ are such that
\begin{align*}
&\langle \zeta_{u_r},\gamma_1(\bar{w_0}-u_r)\rangle_{Z_1^*\times Z_1}=\max_{\zeta\in\partial_C J(\gamma_1(u_r))}\langle \zeta,\gamma_1(\bar{w}_0-u_r)\rangle_{Z_1^*\times Z_1},\\
&\langle \zeta_{\sigma_r},\gamma_2(\bar{\tau}_0-\sigma_r)\rangle_{Z_2^*\times Z_2}=\max_{\zeta\in\partial_C L(\gamma_2(\sigma_r))}\langle \zeta,\gamma_2(\bar{\tau}_0-\sigma_r)\rangle_{Z_2^*\times Z_2}.
\end{align*}
Since any convex and l.s.c. functional is bounded below (see, e.g, \cite[Proposition 1.10]{Brezis2011}) there exist constants $\alpha_\phi,\beta_\phi\ge0$ such that
\begin{eqnarray}\label{eqn3.9}
\phi(u)\ge -\alpha_\phi\|u\|_X-\beta_\phi, \ \forall u\in X.
\end{eqnarray}
Inserting (\ref{eqn3.9}) into (\ref{Coercive1}), we have
\begin{align}\label{eqn3.10}
 2B(u_r,\sigma_r)& -B(u_r,\bar{\tau}_0)-B(\bar{w}_0,\sigma_r) \langle F(u_r),\bar{w}_0-u_r\rangle-\langle G(\sigma_r),\bar{\tau}_0-\sigma_r \rangle\nonumber
 \leq \phi(\bar{w}_0)+\alpha_\phi\|u\|_X\\
 &+\beta_\phi +|\langle \zeta_{u_r}, \gamma_1(\bar{w}_0-u_r)\rangle|+H(\sigma_r)|\langle \zeta_{\sigma_r},\gamma_2(\bar{\tau}_0-\sigma_r)\rangle|.
\end{align}
Dividing \eqref{eqn3.10} by $\sqrt{\|u_r\|^2_X+\|\sigma_r\|^2_Y}$ and letting $r\to \infty$, it gives
 \begin{align*}
 \infty=&\lim_{r\to \infty}\frac{2B(u_r,\sigma_r)-B(u_r,\bar{\tau}_0)-B(\bar{w}_0,\sigma_r) +\langle F(u_r),\bar{w}_0-u_r\rangle+\langle G(\sigma_r),\bar{\tau}_0-\sigma_r \rangle}{\|(u_r,\sigma_r)\|}\nonumber\\
 \leq &  \lim_{r\to\infty}\frac{\phi(\bar{w}_0)+\alpha_\phi\|u\|_X+\beta_\phi +|\langle \zeta_{u_r}, \gamma_1(\bar{w}_0-u_r)\rangle|+H(\sigma_r)|\langle \zeta_{\sigma_r},\gamma_2(\bar{\tau}_0-\sigma_r)\rangle|}{\|(u_r,\sigma_r)\|}\nonumber\\
 \le &\alpha_\phi+\alpha_J\|\gamma_1\|+\alpha_Lm_H\|\gamma_2\|,
\end{align*}
which is obviously a contradiction. Therefore, we conclude that there exists a constant $r_1>0$ such that inequality (\ref{innine}) is satisfied.

Let $(u_{r_1},\sigma_{r_1})\in K_{r_1}\times \Lambda_{r_1}$ be a solution to problem (\ref{eqn3.7}) with $r=r_1$. Furthermore, we are going to show that $(u_{r_1},\sigma_{r_1})$ solves problem (\ref{eqn1}) as well.  Let $(v,\mu)\in K\times \Lambda$ be arbitrarily fixed. It follows from (\ref{innine}) (i.e., $(u_{r_1},\sigma_{r_1})\in B_X(\bar{w}_0,r_1)\times B_Y(\bar{\tau}_0,r_1)$) that there exists a constant $t\in (0,1)$ such that
$$
(w,\lambda):=(u_{r_1},\sigma_{r_1})+t(v-u_{r_1},\mu-\sigma_{r_1})\in K_{r_1}\times \Lambda_{r_1}.
$$
Inserting $(u,\sigma)=(u_{r_1},\sigma_{r_1})$ and $(v,\mu)=(w,\lambda)$ into the first inequality of (\ref{eqn3.7}), we have
\begin{align*}
t\langle F(u_{r_1},v-u_{r_1}) \rangle
 \leq&  B(w,\sigma_{r_1})-B((u_{r_1},\sigma_{r_1}))+\phi(w)-\phi(u_{r_1}) +J^0(\gamma_1(u_{r_1});\gamma_1(w-u_{r_1}))\\
 \leq& t\left[ B(v,\sigma_{r_1})-B(u_{r_1},\sigma_{r_1})+\phi(v)-\phi(u_{r_1}) +J^0(\gamma_1(u_r);\gamma_1(v-u_r)) \right].
\end{align*}
Dividing by $t>0$ we infer directly that $(u_{r_1},\sigma_{r_1})$ satisfies the first inequality of (\ref{eqn1}). Likewise, we also can prove that $(u_{r_1},\sigma_{r_1})$ satisfies the second inequality of (\ref{eqn1}). This means that $(u_{r_1},\sigma_{r_1})\in K_{r_1}\times \Lambda_{r_1}$ is  a solution to problem (\ref{eqn1}), hence the proof is now complete.
\qed

\end{proof}

\section{Applications to Contact Mechanics}

Throughout this section ${\cal S}^m$  denotes the linear space of second order symmetric tensors on $\RR^m$, i.e., ${\cal S}^m=\RR^{m\times m}_s$. The inner products and the corresponding norms on $\mathbb{R}^m$ and ${\cal S}^m$ are defined by
$$
\bu\cdot \bv = u_iv_i, \quad |\bv|=(\bv\cdot \bv)^{1/2}, \quad \bu:=(u_i),\ \bv:=(v_i)\in\RR^m,
$$
and, respectively,
$$
\btau \cdot \bsigma =\tau_{ij}\sigma_{ij}, \quad |\btau|=(\btau\cdot\btau)^{1/2}, \quad \btau:=(\tau_{ij}), \ \bsigma:=(\sigma_{ij})\in {\cal S}^m.
$$
Here and hereafter, $m$ is a positive integer which stands for the dimension of the spatial variable, indices $i$ and $j$ run from $1$ to $m$ and the summation convention of the repeated indices is adopted. For a bounded open set $\Omega\subset \RR^m$ with sufficiently smooth boundary $\Gamma$ (normally, we assume that $\Gamma$ is Lipschitz continuous) we denote by $\bnu$ the outward unit vector on $\Gamma$ and we introduce the following  function spaces which will play a key role in applications to nonsmooth mechanics problems
$$
H:=L^2\left(\Omega;\RR^m\right),\quad {\cal H}:=\left\{\btau = (\tau_{ij}): \ \ \tau_{ij}=\tau_{ji}\in L^2\left(\Omega\right)\right\}=L^2\left(\Omega;{\cal S}^m\right),
$$
$$
H_1:=\left\{\bu\in H: \ \ \bvarepsilon(\bu)\in {\cal H}\right\}=H^1\left(\Omega;\RR^m\right),\quad {\cal H}_1:=\left\{\btau\in {\cal H}: \ \ {\rm Div\;}\btau\in H\right\},
$$
where $\bvarepsilon$ and ${\rm Div}$ are the deformation operator and the divergence operator, respectively, and are defined in the following way
$$
\bvarepsilon (\bu):=(\varepsilon_{ij}(\bu)),\quad \varepsilon_{ij}(\bu):=\frac{1}{2}(u_{i,j}+u_{j,i}), \qquad {\rm Div\;} \btau=(\tau_{ij,j}),
$$
accordingly.  The index following a comma represents the partial derivative with respect to the corresponding component of $x\in \Omega$, i.e., $u_{i,j}=\de u_i/\de x_j$. Keep in mind that the above mentioned spaces are Hilbert spaces endowed with the following corresponding inner products
$$
(\bu,\bv)_H:=\int_\Omega \bu\cdot \bv\; dx,\quad (\btau,\bsigma)_{\cal H}:=\int_\Omega \btau\cdot\bsigma \; dx, \quad (\bu,\bv)_{H_1}:=(\bu,\bv)_H+(\bvarepsilon(\bu),\bvarepsilon(\bv))_{\cal H},
$$
$$
(\btau,\bsigma)_{{\cal H}_1}:=(\btau,\bsigma)_{\cal H}+({\rm Div\; }\btau,{\rm Div\; }\bsigma)_{H}.
$$
We recall that the trace operator $\gamma:H^1(\Omega;\RR^m)\rightarrow H^{1/2}(\Gamma;\RR^m)\subset L^{2}(\Gamma;\RR^m)$ is linear and compact. Sometimes, for simplicity, we will omit to write $\gamma$ to indicate the Sobolev trace on the boundary, i.e., writing $\bv$ instead of $\gamma \bv$. Also, for a given $\bv \in H^{1/2}(\Gamma;\RR^m)$ we denote by $v_{\nu}$ and $\bv_{\tau}$ the normal and the tangential components of $\bv$ on the boundary, i.e., $v_{\nu}:=\bv\cdot \bnu$ and $\bv_{\tau}:=\bv-v_{\nu}\bnu$, respectively. Similarly, for a tensor field  $\bsigma$, we define $\sigma_{\nu}$ and $\bsigma_{\tau}$ to be the normal and the tangential components of the Cauchy vector field $\bsigma\bnu$, that is $\sigma_\nu:=(\bsigma\bnu)\cdot \bnu$ and $\bsigma_\tau:=\bsigma\bnu-\sigma_\nu\bnu$, respectively. Recall that the following Green formula holds
\begin{equation}\label{GreenEqn1}
(\bsigma,\bvarepsilon(\bv))_{\cal H}+({\rm Div\; }\bsigma,\bv)_H=\int_{\Gamma} (\bsigma\bnu)\cdot \bv \;d\Gamma,  \mbox{ for all }\bv\in H_1.
\end{equation}

We consider the following contact model involving a deformable body which occupies a bounded domain $\Omega\subset\mathbb{R}^m$ ($m\ge 2$) with Lipschitz boundary $\Gamma$, which is partitioned into four disjoint measurable parts $\Gamma_1$, $\Gamma_2$ and $\Gamma_3$.

\noindent ${\bf (P)}$ Find a displacement $\bu:\Omega\rightarrow \RR^m$ and a stress tensor $\bsigma:\Omega\rightarrow {\cal S}^m$ such that
\begin{align}
&-{\rm Div\;} \bsigma =\mathbf{f}_0& \mbox{ in }& \Omega,\label{P1Eqn1}\\
&\bsigma \in \de \chi(\bvarepsilon(\bu))&  \mbox{ in }&\Omega,\label{P1Eqn2}\\
&\bu=0& \mbox{ on }& \Gamma_1,\label{P1Eqn3}\\
&\bsigma\bnu=\mathbf{f}_2& \mbox{ on }& \Gamma_2,\label{P1Eqn4}\\
&u_\nu=0, -\bsigma_\tau\in \partial_C j(\bx,\bu_\tau)& \mbox{ on }& \Gamma_3^a,\label{P1Eqn5}\\
&\bsigma_\tau=0, \sigma_\nu\leq 0, u_\nu\leq 0, \sigma_\nu u_\nu=0&  \mbox{ on }&\Gamma_3^b.\label{P1Eqn6}
\end{align}

Problem ${\bf (P)}$ describes the contact between a deformable body and a foundation. Relation \eqref{P1Eqn1} represents the equilibrium equation, showing that the body is subjected to volume forces of density $\mathbf{f}_0$. The behaviour of the material is described by a nonlinear constitutive law expressed as a
(convex) subdifferential inclusion, namely, \eqref{P1Eqn2}. On $\Gamma_1$ the body is clamped, therefore the displacement vanishes here. Equation \eqref{P1Eqn4} shows that surface tractions of density $\mathbf{f}_2$ act on $\Gamma_2$.  On $\Gamma_3$ the body may come in frictional contact with the foundation and this contact is modelled considering there are two potential  contact zones: $\Gamma_3^a$ where a nonmonotone friction law is considered; and $\Gamma_3^b$ where the contact is frictionless and satisfies Signorini unilateral contact model. Note that we allow the case of only one contact zone, i.e., $meas(\Gamma_3^a)=0$ or $meas(\Gamma_3^b)=0$.

For examples of nonlinear constitutive laws of the form $\eqref{P1Eqn2}$ and nonmonotone friction laws of the form \eqref{P1Eqn5} we refer the reader to \cite{Cos-Csi-Var15,Cos-Mat12,Nan-Pan,Pan93,Sof-Mat12}.

We assume that the constitutive function $\chi$ and the initial data $\mathbf{f}_0,\mathbf{f}_2$ satisfy the following conditions:

\begin{enumerate}[$(\mathcal{H}_1)$]

\item $\mathbf{f}_0\in L^2(\Omega;\mathbb{R}^m)$  and $\mathbf{f}_2\in L^2(\Gamma_2;\mathbb{R}^m)$;

\item $\chi:\mathcal{S}^m\to\mathbb{R}$ is a convex and l.s.c. functional with the property that there exist $\alpha,\beta\in (0,1)$ such that
$$
\alpha |\bmu|^2\leq \chi(\bmu)\leq \beta |\bmu|^2,\ \forall \bmu\in \mathcal{S}^m;
$$

\item If $meas(\Gamma_3^a)>0$, then  $j:\Gamma_3^a\times\mathbb{R}\to\mathbb{R}$ is functional enjoying the following properties:
\begin{enumerate}[{\rm (i)}]
 \item $\bx\mapsto j(\bx,t)$ is measurable on $\Gamma_3^a$ for all $t\in\mathbb{R}$;

 \item $j(\bx,0)\in L^1(\Gamma_3^a)$;

 \item there exists $p\in L^2(\Gamma_3^a)$ such that
 $$
 |j(\bx,t_1)-j(\bx,t_2)|\leq p(x)|t_1-t_2|, \mbox{ for a.e. $\bx\in\Gamma_3^a$ and all $t_1,t_2\in\mathbb{R}$}.
 $$
\end{enumerate}

\end{enumerate}

In order to derive the variational formulation of problem ${\bf (P)}$ assume $\bu$ and $\bsigma$ are regular functions that satisfy \eqref{P1Eqn1}-\eqref{P1Eqn6}. Multiplying \eqref{P1Eqn1} by $\bv-\bu$ and integrating over $\Omega$ we apply  Green's formula to find that
$$
(\mathbf{f}_0,\bv-\bu)_{H}=-(\Div \bsigma, \bv-\bu)_{H}=(\bsigma,\bvarepsilon(\bv-\bu))_{\mathcal{H}}-\int_{\Gamma}(\bsigma\bnu)\cdot(\bv-\bu) d\Gamma,
$$
for all $\bv\in X:=\{\bw\in H_1:\ \bw=0 \mbox{ a.e. on }\Gamma_1 \}$. It is well known that $X$ is a closed subspace of $H_1$, therefore it is a Hilbert space endowed with the inner product
$$
(\bu,\bv)_X:=(\bvarepsilon(\bu),\bvarepsilon(\bv))_{\mathcal{H}}.
$$
Keeping in mind the boundary conditions on $\Gamma_3$ we define the set of {\it admissible displacements}
$$
\mathcal{K}_0:=\{\bw\in X:\ w_\nu=0 \mbox{ on }\Gamma_3^a \mbox{ and }w_\nu\leq 0 \mbox{ on }\Gamma_3^b\},
$$
which is nonempty, unbounded, closed and convex subset of $X$. Keeping in mind that $$(\bsigma\bnu)\cdot(\bv-\bu)=\sigma_\nu(v_\nu-u_\nu)+\bsigma_\tau\cdot(\bv_\tau-\bu_\tau)$$
for all $\bv\in \mathcal{K}_0$ we have
$$
-\int_{\Gamma_1}\bsigma\bnu\cdot(\bv-\bu)\; d\Gamma=0,
$$
$$
-\int_{\Gamma_2}\bsigma\bnu\cdot(\bv-\bu)\; d\Gamma=-\int_{\Gamma_2} \mathbf{f}_2\cdot(\bv-\bu)\; d\Gamma,
$$
$$
-\int_{\Gamma_3^a}\bsigma\bnu\cdot(\bv-\bu)\; d\Gamma=\int_{\Gamma_3^a}-\bsigma_\tau\cdot(\bv_\tau-\bu_\tau)\; d\Gamma\leq\int_{\Gamma_3^a} j^0_{,2}(\bx,\bu_\tau;\bv_\tau-\bu_\tau)\; d\Gamma,
$$
and
$$
-\int_{\Gamma_3^b}\bsigma\bnu\cdot(\bv-\bu)\; d\Gamma=-\int_{\Gamma_3^b} \sigma_\nu(v_\nu-u_\nu)\;d\Gamma=-\int_{\Gamma_3^b} \sigma_\nu v_\nu\;d\Gamma\leq 0.
$$
Thus,
\begin{equation}\label{VarFor1}
(\bsigma,\bvarepsilon(\bv-\bu))_{\mathcal{H}}+\int_{\Gamma_3^a}j^0_{,2}(\bx,\bu_\tau;\bv_\tau-\bu_\tau)\;d\Gamma\geq (\mathbf{f}_0,\bv-\bu)_{H}+\int_{\Gamma_2}\mathbf{f}_2\cdot(\bv-\bu)\; d\Gamma,\quad\forall \bv\in \mathcal{K}_0.
\end{equation}
Since the stress tensor $\bsigma$ satisfies \eqref{VarFor1} it is natural to define now the set of {\it admissible stress tensors with respect to a displacement $\bw\in \mathcal{K}_0$} to be the following subset of $\mathcal{H}$
$$
\Theta(\bw):=\left\{\bmu\in\mathcal{H}:\ (\bmu,\bvarepsilon(\bv))_{\mathcal{H}}+\int_{\Gamma_3^a}j^0_{,2}(\bx,\bw_\tau;\bv_\tau)\;d\Gamma\geq (\mathbf{f}_0,\bv)_H+\int_{\Gamma_2}\mathbf{f}_2\cdot\bv\; d\Gamma, \ \forall \bv\in \mathcal{K}_0\right\}.
$$
We define the separable bipotential $b:\mathcal{S}^m\times\mathcal{S}^m\to (-\infty,+\infty]$ by
$$
b(\btau,\bmu):=\chi(\btau)+\chi^\ast(\bmu).
$$
We point out the fact that $b$ connects the constitutive law \eqref{P1Eqn2}, the function $\chi$ and its Fenchel conjugate $\chi^\ast$ as
$$
b(\bvarepsilon(\bu),\bsigma)=\bsigma\cdot \bvarepsilon(\bu)
$$
and
$$
b(\bvarepsilon(\bv),\bmu)\geq\bmu\cdot \bvarepsilon(\bu), \quad \forall \bv\in X,\forall \bmu\in \mathcal{H}.
$$
Note that, if $(\mathcal{H}_3)$ holds, then according to \cite[Lemma 1]{Mat13} one has
$$
(1-\beta)|\bmu|^2\leq \chi^\ast(\bmu)\leq \frac{1}{4\alpha}|\bmu|^2,\ \forall \bmu\in\mathcal{S}^m.
$$
Therefore, we have
$$
\chi(\bmu(\cdot))\in L^1(\Omega) \mbox{ and }\chi^\ast(\bmu(\cdot))\in L^1(\Omega), \ \forall \bmu\in \mathcal{H}.
$$
Thus, the coupling function $B:X\times \mathcal{H}\to \mathbb{R}$ defined  via the bipotential $b$
$$
B(\bv,\bmu):=\int_\Omega b(\bvarepsilon(\bv),\bmu)\; dx,
$$
is well defined and the following estimates hold
\begin{equation}\label{BipVarForm2}
B(\bu,\bsigma)=(\bsigma,\bvarepsilon(\bu))_\mathcal{H} \mbox{ and }B(\bv,\bmu)\geq (\bmu,\bvarepsilon(\bv))_\mathcal{H}, \forall \bv\in X,\forall \bmu\in \mathcal{H}.
\end{equation}
Moreover, there exists a positive constant $C=C(\alpha,\beta)$ such that
\begin{equation}
B(\bv,\bmu)\geq C(\|\bv\|^2_X+\|\bmu\|_{\mathcal{H}}^2),\ \forall \bv\in X,\forall \bmu\in\mathcal{H}.
\end{equation}
Choosing $\bv=\bu+\bw$ in \eqref{VarFor1} we deduce that $\bsigma\in \Theta(\bu)$, therefore $\Theta(\bu)\neq\varnothing$. Keeping in mind the definition of $\Theta(\bu)$ and \eqref{BipVarForm2} we get
\begin{equation}\label{BipVarForm3}
B(\bu,\bmu)+\int_{\Gamma_3^a} j_{,2}^0(\bx,\bu_\tau;\bu_\tau)\; d\Gamma\geq (\mathbf{f}_0,\bu)_H+\int_{\Gamma_2}\mathbf{f}_2\cdot\bu\; d\Gamma.
\end{equation}
On the other hand, taking $\bv:=\mathbf{0}_X$ in \eqref{VarFor1}
\begin{equation}\label{BipVarForm4}
-B(\bu,\bsigma)+\int_{\Gamma_3^a} j_{,2}^0(\bx,\bu_\tau;-\bu_\tau)\; d\Gamma\geq- (\mathbf{f}_0,\bu)_H-\int_{\Gamma_2}\mathbf{f}_2\cdot\bu\; d\Gamma.
\end{equation}
Adding \eqref{BipVarForm3} and \eqref{BipVarForm4} we get
\begin{equation}\label{BipVarForm5}
B(\bu,\bmu)-B(\bu,\bsigma)\geq -\int_{\Gamma_3^a} [ j^0_{,2}(\bx,\bu_\tau;\bu_\tau)+ j^0_{,2}(\bx,\bu_\tau;-\bu_\tau)]\; d\Gamma,\ \forall \bmu\in\Theta(\bu).
\end{equation}
Using relations \eqref{VarFor1},  \eqref{BipVarForm2} and \eqref{BipVarForm5} we derive the following {\it variational formulation in terms of bipotentials} of problem ${\bf (P)}$:

 ${\bf (P)}$: Find $\bu\in \mathcal{K}_0$ and $\bsigma\in \Theta(\bu)$ such that
\begin{equation}\label{BipVarForm}
\left\{
\begin{array}{l}
{\displaystyle B(\bv,\bsigma)-B(\bu,\bsigma)+\int_{\Gamma_3^a} j_{,2}^0(\bx,\bu_\tau;\bv_\tau-\bu_\tau)\; d\Gamma\geq (\mathbf{f}_0,\bv-\bu)_H+\int_{\Gamma_2}\mathbf{f}_2\cdot(\bv-\bu)\; d\Gamma}\\
\medskip
B(\bu,\bmu)-B(\bu,\bsigma)\geq 0,
\end{array}
\right.
\end{equation}
for all $(\bv,\bmu)\in \mathcal{K}_0\times \Theta(\bu)$.

Note that, if $(\bu,\bsigma)$ is a {\it weak solution} of problem ${\bf (P)}$, i.e., it solves \eqref{BipVarForm}, then \eqref{VarFor1} and \eqref{BipVarForm5} are automatically fulfilled as
$$
B(\bv,\bsigma)-B(\bu,\bsigma)\geq (\bsigma,\bvarepsilon(\bv-\bu))_{\mathcal{H}},\ \forall \bv\in \mathcal{K}_0,
$$
 and
$$
0=j_{,2}^0(\bx,\bu_\tau;\mathbf{0}_X)=j_{,2}^0(\bx,\bu_\tau;\bu_\tau-\bu_\tau)\leq j_{,2}^0(\bx,\bu_\tau;\bu_\tau)+j_{,2}^0(\bx,\bu_\tau;-\bu_\tau).
$$

\begin{theorem}\label{ExistenceWeakSol}
Assume $(\mathcal{H}_1), (\mathcal{H}_2)$ and $(\mathcal{H}_3)$ hold. Then problem ${\bf (P)}$ possesses at least one weak solution.
\end{theorem}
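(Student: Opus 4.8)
The plan is to read the variational formulation \eqref{BipVarForm} as a particular instance of the abstract coupled system \eqref{eqn1} and then invoke Theorem \ref{C-Z_MainRes1}. First I would fix the functional setting by taking $X=\{\bw\in H_1:\ \bw=\mathbf{0}\text{ a.e. on }\Gamma_1\}$, $Y=\mathcal{H}$, $K=\mathcal{K}_0$ and $\phi\equiv 0$. The hemivariational term on the displacement is generated by $Z_1:=L^2(\Gamma_3^a;\RR^m)$, the linear compact operator $\gamma_1\bv:=\bv_\tau|_{\Gamma_3^a}$ (trace composed with the tangential projection), and the integral functional $J(\bw):=\int_{\Gamma_3^a}j(\bx,\bw)\,d\Gamma$; by $(\mathcal{H}_3)$(iii) this $J$ is Lipschitz with constant $\|p\|_{L^2(\Gamma_3^a)}$, so $\mathbf{(H_0')}$(ii') holds, and the Aubin--Clarke theorem identifies $J^0(\gamma_1\bu;\gamma_1(\bv-\bu))$ with $\int_{\Gamma_3^a}j^0_{,2}(\bx,\bu_\tau;\bv_\tau-\bu_\tau)\,d\Gamma$. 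Since the second line of \eqref{BipVarForm} carries no nonsmooth term in $\bsigma$, I would set $L\equiv 0$, take $Z_2$ trivial and $H$ equal to any constant in $(0,m_H)$. The load functional $\langle F,\bv\rangle:=(\mathbf{f}_0,\bv)_H+\int_{\Gamma_2}\mathbf{f}_2\cdot\bv\,d\Gamma$ is bounded and linear by $(\mathcal{H}_1)$, hence $F$ is constant and $\mathbf{(H_F^1)}$ holds automatically, while $G\equiv 0$ makes $\mathbf{(H_G^1)}$ trivial. Finally, $B(\bv,\bmu):=\int_\Omega[\chi(\bvarepsilon(\bv))+\chi^\ast(\bmu)]\,dx$ is, by $(\mathcal{H}_2)$ together with the estimate $(1-\beta)|\bmu|^2\le \chi^\ast(\bmu)\le \tfrac{1}{4\alpha}|\bmu|^2$, a well-defined, convex and lower semicontinuous functional on $X\times\mathcal{H}$, so $\mathbf{(H_B)}$ is satisfied.

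Since $\mathcal{K}_0$ and the stress set are unbounded, the decisive analytic input is the coercivity condition $\mathbf{(C)}$. Here I would exploit the separable bipotential structure: for a reference pair $(\bar{w}_0,\bar{\tau}_0)$ one computes
\[
2B(\bu,\bsigma)-B(\bar{w}_0,\bsigma)-B(\bu,\bar{\tau}_0)=\int_\Omega\chi(\bvarepsilon(\bu))\,dx+\int_\Omega\chi^\ast(\bsigma)\,dx-\int_\Omega\chi(\bvarepsilon(\bar{w}_0))\,dx-\int_\Omega\chi^\ast(\bar{\tau}_0)\,dx,
\]
so the $\bmu$- and $\bv$-cross terms cancel and the right-hand side is bounded below by $c(\|\bu\|_X^2+\|\bsigma\|_{\mathcal{H}}^2)$ minus a constant. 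Because $\langle F(\bu),\bar{w}_0-\bu\rangle$ grows at most linearly and $G\equiv 0$, the quotient in $\mathbf{(C)}$ tends to $+\infty$, giving coercivity. Once all hypotheses are in force, Theorem \ref{C-Z_MainRes1} (in the case $j=k=1$) yields a pair $(\bu,\bsigma)$ solving \eqref{BipVarForm}; the remark preceding the theorem then shows that \eqref{VarFor1} and the constitutive law $\bsigma\in\partial\chi(\bvarepsilon(\bu))$ (equivalently $B(\bu,\bsigma)=(\bsigma,\bvarepsilon(\bu))_{\mathcal{H}}$ by Fenchel--Young) are automatically fulfilled, so $(\bu,\bsigma)$ is a weak solution of $\mathbf{(P)}$.

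The hard part is that the stress constraint set $\Theta(\bu)$ depends on the unknown displacement through $\int_{\Gamma_3^a}j^0_{,2}(\bx,\bu_\tau;\cdot)\,d\Gamma$, whereas Theorem \ref{C-Z_MainRes1} requires a \emph{fixed} admissible set $\Lambda$; the product-space argument underlying it (Mosco's alternative applied on $K\times\Lambda$) breaks down for a moving $\Lambda$. To handle this I would freeze the displacement: for each $\bw\in\mathcal{K}_0$ the set $\Theta(\bw)$ is nonempty, closed and convex, and the Lipschitz bound $(\mathcal{H}_3)$(iii) yields the estimate $\int_{\Gamma_3^a}j^0_{,2}(\bx,\bw_\tau;\bv_\tau)\,d\Gamma\ge -\|p\|_{L^2}\|\gamma\|\,\|\bv\|_X$, which is uniform in $\bw$. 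This uniformity is precisely what allows one to (i) locate a reference stress $\bar{\tau}_0\in\bigcap_{\bw}\Theta(\bw)$ as required by $\mathbf{(C)}$, and (ii) solve the frozen problem with $\Lambda=\Theta(\bw)$ via the first two paragraphs, thereby defining a solution map $\bw\mapsto\{\bu\}$. I would then close the argument with a set-valued fixed-point theorem of Kakutani--Ky Fan--Glicksberg type: the compactness of $\gamma_1$ forces $\bw\mapsto j^0_{,2}(\bx,\bw_\tau;\cdot)$ to behave continuously along weakly convergent sequences, which together with the weak compactness furnished by $\mathbf{(C)}$ provides the upper semicontinuity and closed convex values needed for a fixed point $\bu$ with $\Theta(\bu)$ self-consistent. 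Establishing this continuity of the constraint map---essentially a Mosco-type convergence $\Theta(\bw_n)\to\Theta(\bw)$---is the step I expect to demand the most care.
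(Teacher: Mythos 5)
Your functional setting, the verification of $\mathbf{(H_0')}$, $\mathbf{(H_B)}$, $\mathbf{(H_F^1)}$, $\mathbf{(H_G^1)}$ via the Aubin--Clarke theorem, and the coercivity computation all match the paper's proof (one small imprecision: Aubin--Clarke gives only $J^0(\gamma_1\bu;\gamma_1(\bv-\bu))\leq\int_{\Gamma_3^a}j^0_{,2}(\bx,\bu_\tau;\bv_\tau-\bu_\tau)\,d\Gamma$, not an identification; fortunately the inequality points the right way). The genuine gap is in your treatment of the moving constraint set $\Theta(\bu)$. You correctly diagnose that Theorem \ref{C-Z_MainRes1} needs a fixed $\Lambda$, but your proposed remedy --- a Kakutani--Ky Fan--Glicksberg fixed point for the map $\bw\mapsto\bu$, resting on a Mosco-type convergence $\Theta(\bw_n)\to\Theta(\bw)$ and on the existence of a common reference stress $\bar{\tau}_0\in\bigcap_{\bw}\Theta(\bw)$ --- is only announced, not carried out, and you yourself flag its key step as unproven. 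Since $j^0_{,2}(\bx,\cdot\,;\bv_\tau)$ is merely upper semicontinuous, the ``recovery sequence'' half of the Mosco convergence and the single-valuedness/upper semicontinuity of the solution map are nontrivial claims that cannot be waved through; as written the argument does not close.

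The paper avoids the fixed point altogether by exploiting the \emph{separability} of the bipotential: since $B(\bv,\bmu)=\int_\Omega\chi(\bvarepsilon(\bv))\,dx+\int_\Omega\chi^\ast(\bmu)\,dx$, the difference $B(\bv,\bsigma)-B(\bu,\bsigma)$ does not depend on $\bsigma$ and $B(\bu,\bmu)-B(\bu,\bsigma)$ does not depend on $\bu$, so the two inequalities of \eqref{BipVarForm} decouple. One first applies Theorem \ref{C-Z_MainRes1} with $\Lambda:=Y=\mathcal{H}$ to produce a displacement $\bu_1$; the set $\Theta(\bu_1)$ is then \emph{fixed}, and a second application with $\Lambda:=\Theta(\bu_1)$ produces $\bsigma_2\in\Theta(\bu_1)$. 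The mixed pair $(\bu_1,\bsigma_2)$ solves \eqref{BipVarForm} precisely because of the cancellation you yourself noted in the coercivity computation. You had all the ingredients for this observation in hand --- your display showing that the cross terms cancel is exactly the separability --- but you did not push it to the point where it dissolves the self-referential constraint, and instead committed to a substantially harder and unfinished route.
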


\begin{proof}
Let us define the function spaces $Y:=\mathcal{H}$, $Z_1:=L^2(\Gamma_3^a;\mathbb{R}^m)$, $Z_2:=L^2(\Omega;\mathcal{S}^m)$. Moreover, assume that  $\gamma_2:Y\to Z_2$ is the embedding operator and $\gamma_1:X\to Z_1$  is defined by
$$
\gamma_1(\bu):=(\gamma(\bu_\tau))|_{\Gamma_3^a},
$$
where $\gamma:X\to H^{1/2}(\Gamma;\mathbb{R}^m)$ is the trace operator. Also define $J: Z_1\to\mathbb{R}$,  $\phi:X\to(-\infty,\infty]$, $H:Y\to (0,\infty)$, $L:Z_2\to \mathbb{R}$, $F:X\to X^\ast$ and $G:Y\to Y^\ast$ by the following instructions
$$
J(\by):=\int_{\Gamma_3^a} j(\bx,\by(\bx)) d\Gamma,\  \phi:=I_{\mathcal{K}_0},\  H\equiv 1,\  L\equiv 0, \ G\equiv 0_{Y^\ast}
$$
and
$$
\langle F(\bu),\bv \rangle:=(\mathbf{f}_0,\bv)_{\mathcal{H}}+\int_{\Gamma_2} \mathbf{f}_2 \cdot \bv d\Gamma,
$$
respectively.

By virtue of definition of $B\colon X\times Y \to\mathbb{R}$, it is easy to check that ${\bf (H_B)}$, ${\bf (H_F^1)}$ and ${\bf (H_G^1)}$ are fulfilled. Furthermore, the Aubin-Clarke Theorem (see, e.g., Clarke \cite[Theorem 2.7.5]{Clarke}) ensures $J$ is  Lipschitz continuous of constant $\|p\|_{L^2}$ and
$$
J^0(\by;\bz)\leq \int_{\Gamma_3^a} j^0_{,2}(\bx,\by;\bz)\; d\Gamma, \quad\forall \by,\bz\in Z_1.
$$
It follows immediately that
\begin{equation}\label{Aub-Cla_Est}
J^0(\gamma_1(\bu);\gamma_1(\bv-\bu))\leq \int_{\Gamma_3^a} j^0_{,2}(\bx, \bu_\tau;\bv_\tau-\bu_\tau)d\Gamma, \quad \forall \bu,\bv\in X.
\end{equation}
Thus, ${\bf (H_0')}$ holds with $\alpha_J:=\|p\|_{L^2}$ and $\alpha_L:=1$. Let  $\bar{w}_0:=\mathbf{0}_X$ and $\bar{\btau}_0\in Y$ be arbitrarily  fixed. Then
\begin{align*}
&\frac{2B(\bu,\bsigma)-B(\mathbf{0}_X,\bsigma)-B(\bu,\bar{\btau}_0)+\langle F(\bu),-\bu\rangle+\langle G(\bsigma),\bar{\btau}_0-\bsigma\rangle}{\|(\bu,\bsigma)\|}\\
&=\frac{B(\bu,\bsigma)-{\displaystyle \int_\Omega [\chi(\mathbf{0}_X)+\chi^\ast(\bar{\btau}_0)]\; dx-(\mathbf{f}_0,\bu)_{\mathcal{H}}-\int_{\Gamma_2}\mathbf{f}_2\cdot \bu d \Gamma}}{\sqrt{\|\bu\|_X^2+\|\bsigma\|_Y^2}}\\
&\geq \frac{C(\|\bu\|_X^2+\|\bsigma\|_Y^2)-c_1\|u\|_X-c_2}{\sqrt{\|\bu\|_X^2+\|\bsigma\|_Y^2}}\to\infty \mbox{ as }\|(\bu,\bsigma)\|\to\infty,
\end{align*}
for some suitable constants $c_1,c_2>0$. Therefore, the coercivity condition ${\bf (C)}$ also holds. Consequently, problem \eqref{eqn1} possesses at least one solution for any constraint sets $K\subseteq X $ and $\Lambda\subseteq Y$. 

We point out the fact $\Theta(\bw)\neq \emptyset$ for all $\bw\in \mathcal{K}_0$. In order to see this fix $\bw\in \mathcal{K}_0$, $\xi_0\in \partial_C J(\gamma_1(\bw))$ and define $\mathbf{f}\in X$ to be the unique element provided by the Riesz representation theorem such that 
$$
(\mathbf{f},\bv)_X:=(\mathbf{f}_0,\bv)_{\mathcal{H}}+\int_{\Gamma_2} \mathbf{f}_2 \cdot \bv d\Gamma,\ \forall \bv\in X.
$$ 
Let $\bmu_0:=\bvarepsilon(\mathbf{f}- \gamma_1^\ast(\xi_0))$, where $\gamma_1^\ast:Z_1\to X$ is the adjoint operator of $\gamma_1$. Then, $\mathbf{f}=\bvarepsilon^\ast(\bmu_0)+\gamma_1^\ast(\xi_0)$ and  for any $v\in\mathcal{K}_0$ one has
\begin{align*}
(\mathbf{f}_0,\bv)_{\mathcal{H}}+\int_{\Gamma_2} \mathbf{f}_2 \cdot \bv d\Gamma&=\left(\bvarepsilon^\ast(\bmu_0)+\gamma_1^\ast(\xi_0),\bv\right)_X\\
&=(\bmu_0,\bvarepsilon(\bv))_{\mathcal{H}}+\left(\xi_0,\gamma_1(\bv) \right)_{Z_1}\\
&\leq (\bmu_0,\bvarepsilon(\bv))_{\mathcal{H}}+ J^0(\gamma_1(\bw);\gamma_1(\bv))\\
& \leq (\bmu_0,\bvarepsilon(\bv))_{\mathcal{H}}+\int_{\Gamma_3^a} j^0_{,2}(\bx,\bw_{\tau};\bv_{\tau})\;d\Gamma
\end{align*}
which shows that $\bmu_0\in \Theta(\bw)$.

Choosing $K:=\mathcal{K}_0$ and $\Lambda :=Y$, Theorem \ref{C-Z_MainRes1}  ensures the existence of a pair  $(\bu_1,\bsigma_1)\in \mathcal{K}_0\times Y$ such that
\begin{equation}\label{IntermSol1}
\left\{
\begin{array}{l}
B(\bv,\bsigma_1)-B(\bu_1,\bsigma_1)+J^0(\gamma_1(\bu_1);\gamma_1(\bv-\bu_1))\geq \langle F(\bu_1),\bv-\bu_1\rangle, \ \forall v\in \mathcal{K}_0,\\
B(\bu_1,\bmu)-B(\bu_1,\bsigma_1)\geq 0,\ \forall \bmu\in Y.
\end{array}
\right.
\end{equation}
Now, applying again Theorem \ref{C-Z_MainRes1} with $K:=\mathcal{K}_0$ and $\Lambda:=\Theta(\bu_1)$ we infer there exist $\bu_2\in\mathcal{K}_2$ and $\bsigma_2\in\Theta(\bu_1)$ such that
\begin{equation}\label{IntermSol2}
\left\{
\begin{array}{l}
B(\bv,\bsigma_2)-B(\bu_2,\bsigma_2)+J^0(\gamma_1(\bu_2);\gamma_1(\bv-\bu_2))\geq \langle F(\bu_2),\bv-\bu_2\rangle, \ \forall v\in \mathcal{K}_0,\\
B(\bu_2,\bmu)-B(\bu_2,\bsigma_2)\geq 0,\ \forall \bmu\in \Theta(\bu_1).
\end{array}
\right.
\end{equation}
But, from the definition of the coupling function $B$ we have
$$
B(\bv,\bsigma_1)-B(\bu_1,\bsigma_1)=B(\bv,\bsigma_2)-B(\bu_1,\bsigma_2), \ \forall \bv\in X,
$$
and
$$
B(\bu_2,\bmu)-B(\bu_2,\bsigma_2)=B(\bu_1,\bmu)-B(\bu_1,\bsigma_2), \ \forall \bmu\in Y,
$$
which combined with \eqref{IntermSol1}-\eqref{IntermSol2} shows that $(\bu_1,\bsigma_2)$ solves the following system
\begin{equation}\label{IntermSol3}
\left\{
\begin{array}{l}
B(\bv,\bsigma_2)-B(\bu_1,\bsigma_2)+J^0(\gamma_1(\bu_1);\gamma_1(\bv-\bu_1))\geq \langle F(\bu_1),\bv-\bu_1\rangle, \ \forall v\in \mathcal{K}_0,\\
B(\bu_1,\bmu)-B(\bu_1,\bsigma_2)\geq 0,\ \forall \bmu\in \Theta(\bu_1).
\end{array}
\right.
\end{equation}
It is readily seen that, due to \eqref{Aub-Cla_Est}, the pair $(\bu_1,\bsigma_2)$ also solves \eqref{BipVarForm}, i.e., it is a weak solution for problem ${\bf (P)}$.
\end{proof}

\section*{Acknowledgment}
	 
	This project has received funding from the  Natural Science Foundation of Guangxi Grant Nos. GKAD21220144, 2021GXNSFFA196004 and GKAD23026237, the NNSF of China Grant Nos. 12001478, 12101143 and 12371312, the China Postdoctoral Science Foundation funded project No. 2022M721560,  the Startup Project of Doctor Scientific Research of Yulin Normal University No. G2023ZK13,  and the European Union's Horizon 2020 Research and Innovation Programme under the Marie Sklodowska-Curie grant agreement No. 823731 CONMECH. It is also supported by the project cooperation between Guangxi Normal University and Yulin Normal University. The second author is supported by the grant from the National Program for Research of the National Association of Technical Universities - GNAC ARUT 2023, ID: 220235047, "Sisteme cuplate de inegalit\u a\c ti hemivaria\c tionale \c si aplica\c tii".
	
	\section*{Data availability statement}
	Data sharing not applicable to this article as no data sets were generated or analysed during the current study.
	
	\section*{Ethical Approval}
	Not applicable.
	
	\section*{Competing interests} There is no conflict of interests.
	
	\section*{Authors' contributions} The authors contributed equally to this paper.

\pagestyle{plain}
\bibliographystyle{siam}

\bibliography{Biblio}

\end{document}